\newlist{gcases}{enumerate}{1}
\setlist[gcases,1]{
  label={{\it Case}~{\it \Alph*}.},
  topsep=0ex,
  leftmargin=0in,
  labelsep=.1in,
  itemindent=.7in,
  itemsep=0ex
}
\newlist{tenumerate}{enumerate}{1}
\setlist[tenumerate,1]{
  label={(\arabic*)},
  topsep=0ex,
  leftmargin=.3in,
  labelsep=.1in,
  itemindent=0in,
  itemsep=0ex
}
\newlength{\tabwidth}
\newlength{\tabheight}
\newlength{\tabrule}
\newlength{\tabwidthx}
\newlength{\tabheightx}
\def\gentabbox#1#2#3#4{\vbox to \tabheight{\setlength{\tabrule}{#3}%
  \setlength{\tabwidthx}{#1\tabwidth}\addtolength{\tabwidthx}{\tabrule}%

\setlength{\tabheightx}{#2\tabheight}\addtolength{\tabheightx}{-\tabheight}%
  \hbox to #1\tabwidth{%
 \hspace{-0.5\tabrule}\rule{\tabrule}{#2\tabheight}\hspace{-\tabrule}%
    \vbox to #2\tabheight{\hsize=\tabwidthx%
      \vspace{-0.5\tabrule}\hrule width\tabwidthx height\tabrule%
      \vspace{-0.5\tabrule}\vfil%
      \hbox to \tabwidthx{\hss#4\hss}%
        \vfil\vspace{-0.5\tabrule}%
      \hrule width\tabwidthx height\tabrule\vspace{-0.5\tabrule}}%
 \hspace{-\tabrule}\rule{\tabrule}{#2\tabheight}\hspace{-0.5\tabrule}}%
  \vspace{-\tabheightx}}}
\def\genblankbox#1#2{\vbox to \tabheight{\vfil\hbox to
#1\tabwidth{\hfil}}}
\def\tabbox#1#2#3{\gentabbox{#1}{#2}{0.4pt}{\strut #3}}
\newenvironment{tableau}{\bgroup\catcode`\:=13 \catcode`\.=13
  \catcode`\;=13 \catcode`\>=13 \catcode`\^=13
  \setlength{\tabheight}{3ex}\setlength{\tabwidth}{3ex}%
  \def\b##1##2##3{\gentabbox{##1}{##2}{1.2pt}{\vbox{##3}}}%
  \def\n##1##2##3{\gentabbox{##1}{##2}{0.4pt}{\vbox{##3}}}%
  \vbox\bgroup\offinterlineskip}{\egroup\egroup}
\newtheoremstyle{mytheoremstyle} 
    {\topsep}                    
    {\topsep}                    
    {}                   
    {}                           
    {}                   
    {.}                          
    {0.5em}                       
    {\thmnumber{#2.} \thmname{#1}}  
\newtheoremstyle{myremarkstyle} 
    {\topsep}                    
    {\topsep}                    
    {}                   
    {}                           
    {}                   
    {.}                          
    {0.5em}                       
    {\thmname{#1}}  
\theoremstyle{mytheoremstyle}
\newtheorem{theorem}{THEOREM}[section]
\newtheorem{proposition}[theorem]{PROPOSITION}
\newtheorem{definition}[theorem]{DEFINITION}
\theoremstyle{myremarkstyle}
\newtheorem{remark*}{REMARK}
\newcommand\T{\mathbf{T}}
\newcommand\oT{\overline{\mathbf{T}}}
\begin{document}

\noindent
{\bf \large Annihilators and associated varieties of Harish-Chandra modules for $Sp(p,q)$}

\vspace{.3in}
\noindent
WILLIAM MCGOVERN \\
\vspace {.1in}
{\it \small Department of Mathematics, Box 354350, University of Washington, Seattle, WA 98195 }

\section*{Introduction}
The purpose of this paper is to extend the recipes of \cite{G93HC} from the group $SU(p,q)$ to the group Sp$(p,q)$; we will compute annihilators and associated varieties of simple Harish-Chandra modules for the latter group.  We will appeal to the classification of primitive ideals in enveloping algebras of types $B$ and
 $C$ in \cite{G93} via their generalized $\tau$-invariants; thus (inspired by \cite{G93HC}) we will define a map $H$ taking (parameters of) simple Harish-Chandra modules $X$ of trivial infinitesimal character to pairs $(\T_1,\overline{\T}_2)$ where $\T_1$ is a domino tableau and
  $\overline{\T}_2$ an equivalence class of signed tableaux of the same shape as $\T_1$.  Then $\T_1$ will parametrize the annihilator of $X$ (via the classification of primitive ideals in \cite{G93}) while any representative of  $\overline{\T}_2$, suitably normalized, parametrizes its associated variety (via the classification of nilpotent orbits in Lie Sp$(p,q)$ in \cite[9.3.5]{CM93}).  The proof of these properties will rest primarily on the commutativity of the maps $H$ with both
$\tau$-invariants and wall-crossing operators $T_{\alpha\beta}$, defining the latter operators as in  \cite{V79}.  We will parametrize our modules $X$ via signed involutions of signature $(p,q)$ and construct the tableaux from the involutions.

\section{Cartan subgroups and Weyl groups}

For $G=\,\,$Sp$(p,q), n=p+q,p\le q$ we set $\mathfrak g_0 =\,\,$Lie $G$ and we let $\mathfrak g$ be its complexification.  Let $\theta$ be the usual Cartan involution of  $G$ or $\mathfrak g$ and let $\mathfrak k + \mathfrak p$ be the corresponding Cartan decomposition.  Denote by $K$ the subgroup of the complexification of $G$ corresponding to $\mathfrak k$.  Let $H_0$ be a compact Cartan subgroup of $G$ with complexified Lie algebra $\mathfrak h$.   As a choice of simple roots in $\mathfrak g$ relative to $\mathfrak h$ we take $2e_1,e_2-e_1,\ldots,e_{n-1}-e_{n-2},e_n-e_{n-1}$, following \cite{G93}.

There are $p+1$ conjugacy classes of Cartan subgroups of $G$. If we take $H_0$ to be a compact Cartan subgroup and define $H_i$ inductively for $i>0$ as the Cayley transform of $H_{i-1}$ through $e_{p-i+1}-e_{p+i}$ for $1\le i\le p$, then the  $H_i$ furnish a complete set of representatives for the conjugacy classes of Cartan subgroups of $G$.  The real Weyl group $W(H_i)$ of the $i$th  Cartan subgroup $H_i$ is isomorphic to $W_{p-i}\ltimes  S_2^{p-i}\times 
W_i \times W_{q-p+i}$, where $W_r,S_j$ respectively denote the hyperoctahedral group of rank $r$ and the symmetric group on $j$ letters; here $W(H_i)$ embeds into the complex Weyl group $W$ of $\mathfrak g$ (relative to $\mathfrak h$) by permuting, interchanging, and changing the signs of the first $p-i$ pairs of coordinates and then permuting and changing the signs of the next $i$ and then the next $q-p+i$ coordinates \cite{K02,McG98}.  The subgroups $H_i$ are all connected and there is a single block of simple Harish-Chandra modules for $G$ with trivial infinitesimal character.  

\section{The $\mathcal D$ set and Cartan involutions}
Using Vogan's classification of simple Harish-Chandra modules with trivial infinitesimal character by $\mathbb Z/2\mathbb Z$-data \cite{V81}, we parametrize such modules for these groups combinatorially, as follows.   Define $\mathcal S_n$, the set of signed involutions on $n$ letters, to be the set of all sets $\{s_1,\ldots,s_m\}$, where each $s_i$ takes one of the forms $(i,+),(i,-),
(i,j)^+,(i,j)^-$, where $i,j$ lie between 1 and $n$, the pairs $(i,j)$ are ordered with $i<j$, and each index $i$ between 1 and $n$ appears in a unique pair of exactly one of the above types.  We say that $\sigma\in\mathcal S_n$ has signature $(p,q)$ or lies in $\mathcal S_{n,p}$, if the total number of singletons $(i,+)$ and pairs
 $(i,j)^+$ or $(i,j)^-$ in $\sigma$ is $p$.  Let $\mathcal I_n$ denote the set of all involutions in the complex Weyl group $W_n$,  Identify an element $\iota\in\mathcal I_n$ with an element $\sigma\in\mathcal S_n$ by decreeing that $(j,+)\in\sigma$ if $\iota(j) = j, (j,-)\in\sigma$ if $\iota(j) = -j, (i,j)^+\in\sigma$ if the (positive) indices $i,j$ are flipped by $\iota, (i,j)^-\in\sigma$ if instead the indices $i,-j$ are flipped by $\iota$.   Then $W_n$ acts on $\mathcal I_n$ by conjugation and we may transfer this action to $\mathcal S_n$ via the above identification. The set of involutions flipping $p-r$ pairs of indices in $\{\pm1,\ldots,\pm(p+q)\}$ is stable under conjugation and has cardinaility equal to the index of $W(H_{p-r})$, whence we may take $\mathcal S_{n,p}$ as a parametrizing set $\mathcal D$ for the set of simple Harish-Chandra modules for Sp$(p,q)$ with trivial infinitesimal character.  More generally, as needed for inductive arguments below, we define for any subset $M$ of $\{1,\ldots,n\}$ the set $\mathcal S_{M,p}$ in the same way as $\mathcal S_{n,p}$, replacing the numbers 
$1,\ldots,n$ by the numbers in $M$.

The Cartan involution $\theta$ corresponding to any $\sigma\in\mathcal S_{n,p}$ fixes a unit coordinate vector $e_i$ whenever $(i,+)\in\sigma$ or $(i,-)\in\sigma$. If $(i,j)^+\in\sigma$, then
 $\theta$ flips the vectors $e_i$ and $e_j$, while if $(i,j)^-\in\sigma$, then $\theta$ flips $e_i$ and 
 $-e_j$, unless $j-i=1$, in which case $\theta$ sends both $e_i$ and $e_j$ to their negatives.  Thus a simple root $e_{i+1}- e_i$ is compact imaginary for $\sigma$ if and only if $(i,\epsilon)$ and
 $(i+1,\epsilon)$ both lie in $\sigma$ for some sign $\epsilon$, while $2e_1$ is compact imaginary if and only if $(1,\epsilon)$ lies in $\sigma$ for some sign $\epsilon$.
  
 \section{The cross action and the Cayley transform}
 
 For an element $\sigma$ of $\mathcal S_{n,p}$ and a pair of indices $i,j$ between 1 and $n$, we define In$(i,j,\sigma)$ as in \cite[Definition 1.9.1]{G93HC}, interchanging the unique occurrences of $i$ and $j$ in $\sigma$ and leaving all others unchanged (so that $\sigma$ is unchanged if $(i,j)^+$ or $(i,j)^-$ occurs in it).   We define SC$(1,\sigma)$ to be $\sigma$ with the pair $(1,i)^ \epsilon$ in it replaced by $(1,i)^{-\epsilon},-\epsilon$ the opposite sign to $\epsilon$, if there is such a pair in $\sigma$; otherwise SC$(\sigma)=\sigma$.  We define In$(1,2,\sigma)'$ to be $\sigma$ with the pairs $(1,i)^\epsilon,(2,j)^{{\epsilon}'}$ replaced by $(1,j)^{{-{\epsilon}'}},(2,i)^{-\epsilon}$ if such pairs occur in $\sigma$; otherwise we set In$(1,2,\sigma)'=\,$In$(1,2,\sigma)$.
  
 \begin{proposition}\label{proposition:crossaction}
 Let $s$ be the simple root $e_i-e_{i-1},\sigma\in\mathcal S_{n,p}$.  Then $s\times \sigma$, the cross action of $s$ on the parameter $\sigma$, is given by In$(i-1,i,\sigma)$.  For $t = 2e_1$ and $\sigma\in\mathcal S_{n,p}$ we have $t\times\sigma =\, $SC$(1,\sigma)$.
\end{proposition}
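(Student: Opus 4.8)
The plan is to deduce the proposition from a purely combinatorial computation on the set $\mathcal I_n$ of involutions in $W_n$, using the identification of $\mathcal S_{n,p}$ with a conjugation-stable subset of $\mathcal I_n$ set up in Section~2. The key first step is to check that, under this identification, Vogan's cross action of a simple reflection $s$ on the parameter $\sigma$ coincides with the conjugation action $\iota\mapsto s\iota s$ on $\mathcal I_n$ already introduced there. This is the $Sp(p,q)$ analogue of the corresponding fact for $SU(p,q)$ used in \cite{G93HC}, and it is obtained by unwinding the definition of the cross action on Vogan's $\mathbb Z/2\mathbb Z$-data in \cite{V81}: that action transports the data defining $X$ through $s$, and the involution attached to $X$ records exactly the action of the Cartan involution on the relevant Cartan subalgebra, so transporting through $s$ conjugates that involution by $s$. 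Granting this, the proposition becomes the assertion that conjugation by the reflection in $e_i-e_{i-1}$ realizes $\In(i-1,i,\cdot)$ and conjugation by the reflection in $2e_1$ realizes $\SC(1,\cdot)$, and it remains to verify these two identities directly.

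For $s$ the reflection in $e_i-e_{i-1}$, which acts on the coordinates $e_1,\dots,e_n$ simply by transposing $e_{i-1}$ and $e_i$, conjugation of a signed permutation by $s$ relabels the indices $i-1$ and $i$ and fixes everything else. One runs through the possible forms of an entry of $\sigma$ containing $i-1$ or $i$: a singleton $(i-1,\epsilon)$ or $(i,\epsilon)$; a pair involving exactly one of $i-1,i$, say $(a,i-1)^\epsilon$, $(i-1,b)^\epsilon$, $(a,i)^\epsilon$ or $(i,b)^\epsilon$; and the pair $(i-1,i)^\epsilon$. In the first two cases conjugation by $s$ has precisely the effect of interchanging the occurrences of $i-1$ and $i$ --- the sign on a pair is unchanged because it tracks the \emph{signed} coordinate being flipped, which a pure transposition does not touch, and no re-ordering within a pair is forced since $i-1$ and $i$ are adjacent. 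In the last case, conjugating the (signed) transposition on $\{i-1,i\}$ by the transposition of $i-1$ and $i$ returns the same element, so $(i-1,i)^\pm$ is fixed. Comparing with \cite[Definition~1.9.1]{G93HC}, this is exactly $\In(i-1,i,\sigma)$, which likewise leaves $(i-1,i)^\pm$ alone.

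For $t=2e_1$, the reflection in $t$ negates the first coordinate and fixes the others, so conjugation by $t$ alters $\iota$ only on entries involving the index $1$. If $1$ occurs in a singleton $(1,\epsilon)$, a short computation gives $(t\iota t)(1)=\iota(1)$, so $\sigma$ is unchanged; entries $(a,b)^\epsilon$ with $1\notin\{a,b\}$ are obviously unchanged as well. If $1$ occurs in a pair $(1,k)^\epsilon$, so that $\iota$ flips $1$ and $\epsilon k$, then $t\iota t$ flips $1$ and $-\epsilon k$, that is, it replaces $(1,k)^\epsilon$ by $(1,k)^{-\epsilon}$ and changes nothing else. This is precisely the recipe for $\SC(1,\sigma)$, including the convention $\SC(1,\sigma)=\sigma$ when no pair $(1,k)^\epsilon$ occurs, which completes the argument.

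The one genuinely delicate step is the first: identifying the cross action transported to $\mathcal I_n$ with conjugation requires care with Vogan's normalization of the cross action on $\mathbb Z/2\mathbb Z$-data, in particular with any $\rho$-shift hidden in it; after that, everything above is bookkeeping. One should also confirm that the identification of $\mathcal S_{n,p}$ with $\mathcal I_n$ relevant to the cross action is the ``untwisted'' one of Section~2 rather than the $j-i=1$ modification introduced later in connection with the Cartan involution $\theta$.
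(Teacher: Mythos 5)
Your proposal is correct in substance and follows essentially the same route as the paper, whose entire proof is the one-line remark that the claim ``may be computed directly, along the lines of \cite[\S\S 1.8,9]{G93HC}''; you are simply supplying the details of that direct computation, namely that the cross action transported to $\mathcal I_n$ is conjugation and that conjugation by the reflections in $e_i-e_{i-1}$ and $2e_1$ realizes $\In(i-1,i,\cdot)$ and $\SC(1,\cdot)$ respectively. Your closing caveats (the normalization of Vogan's cross action, and the fact that the relevant involution is the bookkeeping element $\iota\in\mathcal I_n$ of Section~2 rather than the Cartan involution $\theta$ with its $j-i=1$ twist) identify exactly the points the paper leaves implicit.
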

 
 \begin{proof} This may be computed directly, along the lines of \cite[\S\S 1.8,9]{G93HC}.
 \end{proof}
 
 As in \cite{G93HC}, we will also need to compute Cayley transforms of parameters $\sigma$ through simple roots.  It suffices to consider the simple root $s=e_{i+1} - e_i$.  Define
  $c^i(\sigma)$ to be $\sigma$ with the pairs $(i,\epsilon),(i+1,-\epsilon)$ replaced by $(i,i+1)^\epsilon$ for $\sigma\in\mathcal S_{n,p}$, if such pairs occur in $\sigma$; otherwise $c^i(\sigma)$ is undefined.  Thus $c^i(\sigma)$ is defined if and only if $e_{i+1}-e_i$ is an imaginary noncompact root for $\sigma$.  In a similar way, define $c_i(\sigma)$ if $e_{i+1}-e_i$ is real for $\sigma$ to be the i through nvolution obtained from $\sigma$ by replacing $(i,i+1)^\epsilon$ by $(i,\epsilon),(i+1,-\epsilon)$.  
  
\begin{proposition}\label{proposition:transform}
If $e_{i+1} - e_i$ is imaginary noncompact for a parameter $\sigma$ in $\mathcal S_{n,p}$, then the Cayley transform of $\sigma$ through this root is given by $c^i(\sigma)$.  If this root is real for $\sigma$, then the Cayley transform through it is given by $c_i(\sigma)$.
 \end{proposition}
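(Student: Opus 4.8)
The plan is to follow the template of \cite[\S\S1.8--9]{G93HC}. Recall that in Vogan's classification \cite{V81} a parameter of trivial infinitesimal character is recorded by a Cartan involution $\theta$ together with a grading of its imaginary roots, and that the Cayley transform through an imaginary noncompact simple root $\alpha$ replaces $\theta$ by $\mathrm{Ad}(\mathbf c_\alpha)\circ\theta$, where $\mathbf c_\alpha=\exp\bigl(\tfrac{\pi\sqrt{-1}}{4}(X_\alpha-X_{-\alpha})\bigr)$ for a fixed choice of root vectors $X_{\pm\alpha}$, the grading being transported in the prescribed manner; the Cayley transform through a real root is the inverse operation. The whole construction lives in the rank-one subalgebra $\mathfrak g^\alpha$ generated by $\mathfrak g_\alpha$ and $\mathfrak g_{-\alpha}$, and for $\alpha=e_{i+1}-e_i$ the corresponding rank-one subgroup acts nontrivially only on the four weight vectors $e_{\pm i},e_{\pm(i+1)}$ of the defining representation of $\mathbf G$. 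So the first step is to reduce to tracking the effect of $\mathrm{Ad}(\mathbf c_\alpha)$ on $\theta$ and on the grading within the span of these vectors and the roots built from $e_i,e_{i+1}$, all other coordinates being untouched; in particular the Cayley transform becomes a partially defined map of $\mathcal S_{n,p}$ to itself (one checks at once that the signature is preserved), and the content of the proposition is the identification of this map with $c^i$ in the noncompact case and with $c_i$ in the real case.

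For the imaginary noncompact case, combining the compactness criterion of Section~2 with the description of $\theta$ given there, the root $e_{i+1}-e_i$ is imaginary and noncompact for $\sigma$ precisely when $\sigma$ contains the two singletons $(i,\epsilon)$ and $(i+1,-\epsilon)$, so that $\theta$ fixes both $e_i$ and $e_{i+1}$. I would fix a Chevalley basis with the normalization $\theta X_\alpha=-X_{-\alpha}$ appropriate to a noncompact root, compute $\mathrm{Ad}(\mathbf c_\alpha)$ on the four weight vectors above, and read off that the transformed involution $\theta'$ interchanges $e_i$ with $\pm e_{i+1}$; comparing this sign against the conventions of Section~2 identifies $\theta'$ as the Cartan involution attached to the pair $(i,i+1)^{\epsilon}$. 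The exceptional clause of Section~2 for adjacent pairs is exactly what one needs here: it records the fact that $\theta'$ acts by $-1$ on the new split coordinate $\alpha^\vee=e_{i+1}^\vee-e_i^\vee$, and once one checks that Vogan's transported grading imposes nothing beyond remembering $\epsilon$, the identification with $c^i$ is complete. The real case is the inverse: $e_{i+1}-e_i$ is real for $\sigma$ precisely when $\sigma$ contains $(i,i+1)^{\epsilon}$ (in the adjacent-pair normalization), and applying $\mathbf c_\alpha^{-1}$ splits this pair back into two singletons on $i$ and $i+1$, with opposite signs and with the sign on the lower index equal to the value of the representation's $\mathbb Z/2\mathbb Z$-character on the relevant $SL_2$; the normalization forces this value to be $\epsilon$, which gives $c_i$.

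The step I expect to be the main obstacle is the sign bookkeeping. One has to choose the root vectors $X_{\pm\alpha}$, the branch of the Cayley element, and the grading normalization so that they are simultaneously compatible with the fixed system of simple roots and with the identification $\mathcal I_n\leftrightarrow\mathcal S_n$ of Section~2, and then verify that the sign coming out of $\mathrm{Ad}(\mathbf c_\alpha)$ in the imaginary case, and out of the character computation in the real case, is the one dictated by $c^i$, respectively $c_i$, rather than its negative. Once a single coherent set of conventions is in place, what remains is the finite, direct computation sketched above, entirely parallel to \cite[\S\S1.8--9]{G93HC}.
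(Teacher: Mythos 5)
Your proposal is correct and takes essentially the same route as the paper, which simply records that the result ``follows from a direct computation, along the lines of \cite[1.12]{G93HC}''; your reduction to the rank-one subgroup attached to $e_{i+1}-e_i$, the identification of the noncompact imaginary case with $(i,\epsilon),(i+1,-\epsilon)\in\sigma$ and of the real case with $(i,i+1)^\epsilon\in\sigma$, and the sign bookkeeping for $\mathrm{Ad}(\mathbf c_\alpha)$ are exactly the content of that computation. (The relevant template in Garfinkle is \S1.12, the Cayley-transform section, rather than \S\S1.8--9, which treat the cross action.)
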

 
 \begin{proof} Again this follows from a direct computation, along the lines of \cite[1.12]{G93HC}.
 \end{proof}
 
 \section{$\tau$-invariants and wall-crossing operators}
 
 We define the $\tau$-invariant $\tau(\sigma)$ of a parameter $\sigma$in $\mathcal S_{n,p}$ in the same way as in \cite[1.13]{G93HC}: it consists of the simple roots that are either real, compact imaginary, or complex and sent to negative roots by the Cartan involution $\theta$.  We extend this definition to $\mathcal S_{M,p}$ as in \cite[1.15]{G93HC}.  Similarly, if $\alpha,\beta$ are nonorthogonal simple roots of the same length, then we define the wall-crossing operator $T_{\alpha\beta}$ on $\mathcal S_{n,p}$ as in \cite[1.14]{G93HC} and \cite{V79}.  It is single-valued.  If $\alpha,\beta$ are nonorthogonal but have different lengths we define
  $T_{\alpha\beta}$ on $\mathcal S_{n,p}$ in the same way; in this setting
 it takes either one or two values.  In both cases $T_{\alpha\beta}$ sends parameters with $\alpha$ not in the $\tau$-invariant but $\beta$ in it to parameters with the opposite property.  In more detail, if a parameter $\sigma$ includes $(1,2)^+$, then the effect of $T_{\alpha\beta}$ on it is to replace $(1,2)^+$ by either $1^+, 2^-$ or $1^-,2^+$, and vice versa; otherwise we interchange $1$ and $2$ in $\sigma$ if at least one of them is paired with another index but they are not paired with each other, or we change the sign attached to the pair $(1,i)^\epsilon$ in $\sigma$, whichever (or both) of these operations has the desired effect on the $\tau$-invariant of $\sigma$.
 
  \section{The algorithm}
 
 We now describe the algorithm we will use to compute for a given $\sigma\in\mathcal S_{n,p}$ the annihilator and associated variety of the corresponding Harish-Chandra module for Sp$(p,q)$.  We  will attach an ordered pair $(\T_1,\overline{\T}_2)$ to $\sigma$, where $\T_1$ is a domino tableau in the sense of \cite{G90} and $\overline{\T}_2$ is an equivalence class of signed tableaux of signature $(2p,2q)$ and the same shape as $\T_1$.  The shape of $\T_1$ will be a doubled partition of $2(p+q)$; that is, a partition of $2(p+q)$ in which the parts occur in equal pairs.  Any tableau in $\overline{\T}_2$ will thus also have rows occurring in pairs, called double rows, of equal length; moreover the two rows in a double row will begin with the same sign.  The equivalence relation defining $\overline{\T}_2$ will be that we can change all signs in any pair of double rows of the same even length, or in any pair $D_1,D_2$ of double rows of different even lengths whenever there is an open cycle of $\T_1$ in the sense of \cite[3.1.1]{G93} (so not including its smallest domino) with its hole in one of $D_1,D_2$ and its corner in the other.  Here we allow the second double row $D_2$ to have length 0, so that, for example, if 
 \vskip .1in
 $$
 \raisebox{2ex}{$\T_1=$\;}
 \begin{small}
 \begin{tableau}
 :>1\\
 :>2\\
 \end{tableau}
 \end{small}
 $$
 \vskip .1in
 \noindent then the two signed tableaux of the same shape (one with its rows beginning with $+$, the other with $-$) both lie in $\overline{\T}_2$, since if $\T_1$ is moved through the open cycle of its 2-domino, then that domino is given a clockwise quarter-turn, so that it now occupies one square of a double row that was empty in $\T_1$.  On the other hand, if $\T_1$ is replaced by its transpose, then the two signed tableaux of this shape lie in separate classes $\overline{\T}_2$, since in that case the open cycle of the 2-domino does not intersect the empty double row.  In addition to this equivalence relation, we decree as usual for signed tableaux that any two of them are identified whenever one can be obtained from the other by interchanging pairs of double rows of the same length.  The signature of $\overline{\T}_2$ (i.e. the number of $+$ signs in it) will be $2p$; note that this is an invariant of $\overline{\T}_2$.  To construct 
 $\T_1$ and $\overline{\T}_2$ we follow a similar recipe to \cite[Chap.\ 3]{G93HC}, replacing the tableaux occurring there with domino tableaux and using insertion and bumping for domino tableaux as in \cite{G90}.
 
 \begin{definition}\label{definition:addsingleton}
 Let $\sigma\in\mathcal S_{n,p}$.  Order the elements of $\sigma$ by increasing size of their largest numbers.  We construct the pair $H(\sigma) = (\T_1,\overline{\T}_2)$ attached to $\sigma$ inductively,  starting from a pair of empty tableaux.  At each step we insert the next element $(i,\epsilon)$ or
 $ (i,j)^\epsilon$ into the current pair of tableaux.  Assume first that the next element of $\sigma$ is $(i,\epsilon)$ (with $\epsilon$ a sign) and choose any representative $\T_2$ of $\overline{\T}_2$. 
 
 \begin{tenumerate}
 
 \item If the first double row of $\T_2$ ends in $-\epsilon$, then add $\epsilon$ to the end of both of its rows and add a vertical domino labelled $i$ to the end of the first double row of
  $\T_1$.   
 
 \item If not and if the first double row of $\T_2$ has (rows of) even length, then we look first for a lower double row of $\T_2$ with the same length ending in $-\epsilon$; if there is a such a double row, we interchange it with the first double row in $\T_2$ and then proceed as above.  Otherwise we start over, trying to insert $(i,\epsilon)$ into the highest double row of $\T_2$ strictly shorter than its first double row.  (In the end, we may have to insert a domino labelled $i$ into a new double row of $\T_1$, using $\epsilon$ for both signs in the new double row of $\T_2$.)
 
 \item If not and the first (or first available) double row of $\T_2$ has odd length but there is more than one double row of this length, but none ending in $-\epsilon$, then we change all signs in the first two double rows of $\T_2$ of this length and then proceed as in the previous case.
  
 \item Otherwise the highest available double row $R$ in $\T_2$ has even length, ends in
  $\epsilon$, and is the only double row of this length.  In this case we look at the domino in $\T_1$ occupying the last square in the lower row of $R$.  If we move $\T_1$ through the open cycle of this domino, we find that its shape changes by removing this square and adding a square either at the end of the higher row of some double row $R'$ of $\T_1$ or else in a new row, not in $\T_1$.  If it lies in a new row, then change all signs in $R$ and proceed as above.  If it does not lie in a new row and $R'\ne R$, then change the signs of $\T_2$ in both $R$ and $R'$ and proceed as above (again not actually moving $\T_1$ through the cycle).  Finally, if $R=R'$, then move $\T_1$ through the open cycle, place a new horizontal domino labelled $i$ at the end of the lower row of $R$ in $\T_1$, and choose the signs in $\T_2$ so that both rows of $R$ now end in $\epsilon$ while all other rows of $\T_2$ have the same signs as before.
 \end{tenumerate}
 \end{definition}

\begin{definition}\label{definition:addpair}
Retain the notation of the previous definition but assume now that the next element of $\sigma$ is
  $(i,j)^\epsilon$.  We begin by inserting a horizontal domino labelled $i$ at the end of the first row of $\T_1$ if $\epsilon=+$, or a vertical domino labelled $i$ at the end of the first column of $\T_1$ if
  $\epsilon=-$, following the procedure of \cite{G90} (and thus bumping dominos with higher labels as needed).  We obtain a new tableau $\T'$, whose shape is obtained that of $\T_1$ by adding a single domino $D$, lying either in some double row $R$ of
   $\T_1$ or else in a new row (in which case $D$ must be horizontal).   Let $\ell$ be the length of $R$ (before $D$ was added).

\begin{tenumerate}
    \item If $D$ is horizontal and $\ell$ is even, then add a domino labelled $j$ to $\T'$ immediately below the position of $D$, in the lower row of $R$.  Choose signs in $\T_2$ so that both rows of $R$ now end in a different sign than they did before; leave all other signs the same.
If $D$ lies in a new row, then we have a new double row in $\T_2$, which can begin with either sign; to make a particular choice, we decree that both rows in the new double row begin with $-$.

    \item If $D$ is horizontal and $\ell$ is odd, then $\T'$ does not have special shape in the sense of \cite{G90}, but its shape becomes special if one moves through just one open cycle.  Move through this cycle and choose the signs in $\T_2$ so that $R$ is now a genuine double row and its rows end in a different sign than they did before.  Then $\T_2$ has either two more $+$ signs than before or two more $-$ signs.  Insert a vertical domino labelled $j$ to the first available double row in $\T_1$ strictly below $R$, following the procedure of the previous definition.  The sign attached to $j$ is $-$ if $\T_2$ gained two $+$ signs and is $+$ otherwise.
    
    \item If $D$ is vertical and $\ell$ is even, then $R$ is still a double row; choose signs so that its rows end in the same sign as they did before, leaving all other signs unchanged.  If a new double row was created by inserting the new domino, then its rows can begin with either sign; to make a particular choice, we decree that this sign is $+$.  Now add a new vertical domino $j$ to the first available double row strictly below $R$, as in the previous case, giving it the same sign as in that case.
    
    \item If $D$ is vertical and $\ell$ is odd, then proceed as in the previous case.
\end{tenumerate}
\end{definition}
\vskip .1in

\noindent One can check that either choice of sign made in Definition 5.2(1) or (3) gives rise to equivalent tableaux $\T_2$, so that in the end we get a well-defined equivalence class $\overline{\T}_2$.   To compute the associated variety of the Harish-Chandra module corresponding to $\sigma$, we choose any representative of $\overline{\T}_2$ and normalize it so that all even rows begin with $+$; this is because this variety is the closure of one nilpotent $K$-orbit in $\mathfrak p^*$ \cite[5.2]{T07} and such orbits (via the Kostant-Sekiguchi bijection between them and nilpotent orbits in $\mathfrak g_0$) are parametrized by signed tableaux of signature $(2p,2q)$ in which even rows begin with $+$ \cite[9.3.5]{CM93}.  Later we will show that the map $H$ defines a bijection between $\mathcal S_{n,p}$ and pairs $(\T_1,\overline{\T}_2)$ with $\overline{\T}_2$ of signature $(2p,2q)$
\vskip .2in
\noindent We give two examples.  First let $\sigma = \{(1,2)^+\}\in\mathcal S_{2,1}$.  Then we get 
$$
\raisebox{2ex} {$\T_1=$\;}
\begin{small}
\begin{tableau}
:>1\\
:>2\\
\end{tableau}
\end{small}
$$
\noindent while $\overline{\T}_2$ consists of both signed tableaux of this shape, as noted above.
In the other example, we let $\sigma = \{((1,+),(2,-),(3,4)^+,(5,+)\}\in\mathcal S_{5,3}$.
Then
$$
\raisebox{2ex} {$\T_1=$\:}
\begin{small}
\begin{tableau}
:^1>2>3\\
:;>4>5\\
\end{tableau}
\end{small}
\hspace{.2in}
\raisebox{2ex}{and \hspace{.2in}$\T_2=$\;}
\begin{small}
\begin{tableau}
:^+^-^+^-^+\\
:;;;;;\\
\end{tableau}
\end{small}
$$
\noindent where we denote signed tableaux by tableaux tiled by vertical dominos, each labelled with the (common) sign of each of its squares; note that here $\overline{\T}_2$ consists of a single tableau.

 \section{$\tau$-invariants and $\T_{\alpha\beta}$ on tableaux}
 
 We define the $\tau$-invariant $\tau(\T_1,\overline{\T}_2)$ of a pair $(\T_1,\overline{\T}_2)$, or just of its domino tableau $\T_1$, in the same way as \cite[2.1.9]{G92}; thus for example $2e_1$ lies in the $\tau$-invariant of $\T_1$ in type $C$ if and only if the 1-domino in it is vertical.  If $\alpha,\beta$ are simple roots of the form $e_i-e_{i-1},e_{i+1}-e_i$ for some $i$, then we define $T_{\alpha\beta}$ on a domino tableau $\T_1$ lying in the domain $D_{\alpha\beta}$ of this operator as in \cite[2.1.10]{G92} and extend this to a pair $(\T_1,\overline{\T}_2)$ by making the operator act trivially on $\overline{\T}_2$.   We now define $T_{\alpha\beta}$ in the other cases, following the notation of \cite[2.3.4]{G92}, and as in \cite{G92} defining this operator on pairs rather than single tableaux. 
 
 \begin{definition}\label{definition:Coperator}
 Suppose that $\alpha=2e_1,\beta=e_2 -e_1$.  If $\T_1\in D_{\alpha\beta}$, then either $F_2\subseteq \T_1$ or $\tilde{F}_2\subseteq\T_1$.
 
 \begin{tenumerate}
 \item  In the first case, let $\T_1'$ be obtained from
  $\T_1$ by replacing $F_2 $ by $F_1$.   If the 2-domino of $\T_1'$ lies in an open cycle not including the 1-domino and if the equivalence class $\overline{\T}_2$ breaks up into two classes $\overline{\T}_2',\overline{\T}_2''$ with respect to $\T_1'$, then we set $T_{\alpha\beta}(\T_1,\overline{\T}_2) = ((\T_1',\overline{\T}_2'),(\T_1',\overline{\T}_2''))$.  If the 2-domino lies in a closed cycle $c$, then let $\tilde\T_1'$ be the tableau obtained from $\T_1'$ by moving through $c$ and we set $T_{\alpha\beta}(\T_1,\overline{\T}_2) =((\tilde\T_1',\overline{\T}_2),(\T_1',\overline{\T}_2))$.  Otherwise set $T_{\alpha\beta}(\T_1,\overline{\T}_2) = (\T_1',\overline{\T}_2)$.
  
  \item  If instead $\tilde{F}_2\subseteq\T_1$, then the 2-domino of $\T_1$ lies in a closed cycle $c$, since $\T_1$ has the (special) shape of a doubled partition; if this cycle were open, it would have to be simultaneously an up and down cycle in the sense of \cite[\S3]{G93}, a contradiction.  Let $\tilde{\T}_1$ be obtained from $\T_1$ by moving through $c$ and let $\tilde{\T}_1'$ be obtained from $\tilde{\T}_1$ by replacing $F_2$ by $F_1$.  Then set $T_{\alpha\beta}(\T_1,\overline{\T}_2)$ = $(\tilde{\T}_1',\overline{\T}_2)$.
  
  \item If instead $\alpha=e_2 -e_1,\beta=2e_1$, then define $T_{\alpha\beta}(\T_1)$ for 
  $\T_1\in D_{\alpha\beta}$ as above, interchanging $F_1,F_2$ throughout by $\tilde{F}_1,\tilde{F}_2$.
  \end{tenumerate}
   \end{definition}
  \vskip .2in
  \noindent For example, if $\T_1$ is as in the first example in the last section, so that $\overline{\T}_2$ consists of both signed tableaux of this shape, then $T_{2e_1,e_2-e_1}$ sends $(\T_1,\overline{\T}_2)$ to the pair $((\T',\T_2'),(\T',\T_2'')$, where $\T'$ the transpose of $\T_1$ and $\T_2',\T_2''$ are the two signed tableaux in $\overline{\T}_2$.  Note also that, unlike \cite[2.3.4]{G92}, we must not move through any open cycles, as all of our tableaux must have doubled partition shape.  There are no right domino tableaux and so there is no notion of extended cycle.

 \section{$H$ commutes with $\tau$-invariants}
 
 As in \cite{G93HC}, we prove that our algorithm $H$ computes the annihilators of simple Harish-Chandra modules by showing that it commutes with taking $\tau$-invariants and applying wall-crossing operators.  In this section we deal with $\tau$-invariants.
 
 \begin{proposition}\label{proposition:tau}
 Let $\sigma\in\mathcal S_{n,p}$ and $\alpha$ a simple root for Sp$(p,q)$.  Then
  $\alpha\in\tau(\sigma)$ if and only if $\alpha\in\tau(H(\sigma)$.
   \end{proposition}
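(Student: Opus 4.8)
\medskip
\noindent\emph{Sketch of the intended proof.} The plan is to induct on $n$, at each step removing the element $s_m\in\sigma$ that contains the largest index appearing in $\sigma$. There are two kinds of simple root, the short root $2e_1$ and a long root $e_{i+1}-e_i$ with $1\le i\le n-1$, and for each I would first record the local description of $\alpha\in\tau(\sigma)$ furnished by Section 3 together with Propositions \ref{proposition:crossaction} and \ref{proposition:transform}. For the short root: $2e_1$ is compact imaginary, hence in $\tau(\sigma)$, exactly when $\sigma$ contains $(1,+)$ or $(1,-)$; when $\sigma$ contains a pair $(1,i)^\epsilon$ the root $2e_1$ is real or complex and lies in $\tau(\sigma)$ iff $\epsilon=-$; otherwise $2e_1\notin\tau(\sigma)$. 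For a long root, $e_{i+1}-e_i\in\tau(\sigma)$ depends only on the elements $s_i,s_{i+1}$ of $\sigma$ containing $i$ and $i+1$: it is compact imaginary (in $\tau$) when these are equal-signed singletons, noncompact imaginary (not in $\tau$) when they are opposite-signed singletons, real (in $\tau$) when $\{i,i+1\}$ is itself a pair of $\sigma$, and complex otherwise, with $\tau$-membership then governed by the partners of $i$ and $i+1$ and their signs. On the tableau side I would use the characterizations recalled at the start of this section and in \cite[2.1.9]{G92}: $2e_1\in\tau(\T_1)$ iff the $1$-domino of $\T_1$ is vertical, and $e_{i+1}-e_i\in\tau(\T_1)$ iff the $(i+1)$-domino sits suitably below the $i$-domino.

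The base of the induction is immediate. For the inductive step put $\sigma'=\sigma\setminus\{s_m\}\in\mathcal S_{M',p'}$; by the structure of Definitions \ref{definition:addsingleton} and \ref{definition:addpair}, $H(\sigma)$ is obtained from $H(\sigma')$ by a single insertion step. Assume first that $\alpha$ involves no index lying in $s_m$. On the involution side $\alpha\in\tau(\sigma)\iff\alpha\in\tau(\sigma')$, since removing $s_m$ changes neither the elements of $\sigma$ meeting $\alpha$ nor their partners. On the tableau side one must see that the single insertion step carrying $H(\sigma')$ to $H(\sigma)$ --- which may bump dominoes of label $\ge i+2$ and move $\T_1$ through open or closed cycles --- leaves $\alpha\in\tau(\T_1)$ unchanged. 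For $\alpha=2e_1$ this is clean: the verticality of the $1$-domino is settled once, at the moment the element of $\sigma$ containing the index $1$ is inserted (a singleton yields a vertical $1$-domino, a pair $(1,i)^\epsilon$ yields a vertical $1$-domino iff $\epsilon=-$), and one checks it is preserved by all subsequent bumps and by every cycle-move called for in Definitions \ref{definition:addsingleton}--\ref{definition:addpair}; this agrees with the description of $2e_1\in\tau(\sigma)$. For a long root one combines Garfinkle's compatibility of domino insertion and bumping with $\tau$-invariants (\cite[2.1.9]{G92}, \cite{G90}) with a check that the open cycles prescribed by cases (2) and (4) of the two definitions --- always the cycle of the last domino of the double row under consideration, or the cycle making an odd double row genuine --- do not meet the $i$- and $(i+1)$-dominoes in a $\tau$-altering way; closed-cycle moves preserve the whole $\tau$-invariant and are harmless.

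It remains to treat $\alpha$ involving an index of $s_m$. As $s_m$ contains the largest index $n$, only a short list of sub-cases survives: $\alpha=2e_1$ with $s_m=(1,n)^\epsilon$; $\alpha=e_{i+1}-e_i$ with $s_m=(n,\epsilon)$ a singleton and $i+1=n$; and $\alpha=e_{i+1}-e_i$ with $s_m$ a pair containing $i$ or $i+1$ (possibly the pair $\{i,i+1\}$ itself). In each I would argue directly: on the involution side $\alpha\in\tau(\sigma)$ is read off from $s_m$ and the other element meeting $\alpha$, while on the tableau side I would trace where the new domino(s) land --- a singleton gives, via Definition \ref{definition:addsingleton}, a vertical $n$-domino at the end of a double row after the possible open-cycle move of case (4); a pair $(k,n)^\epsilon$ gives, via Definition \ref{definition:addpair}, a horizontal or vertical $k$-domino at the end of the first row or column, then a companion $n$-domino lower down, with the cycle-moves of cases (1)--(4). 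Comparing the resulting relative position of the two dominoes meeting $\alpha$ with the tableau characterization of $\tau$ yields the claim; the point is that the signs in Definitions \ref{definition:addsingleton}--\ref{definition:addpair} were chosen precisely so that the horizontal-versus-vertical orientation of the inserted dominoes tracks the real / noncompact-imaginary / complex trichotomy of the corresponding roots. This is a finite case check.

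The main obstacle is the control of the open-cycle moves used above. Unlike plain domino insertion, the algorithm of Definitions \ref{definition:addsingleton}--\ref{definition:addpair} moves $\T_1$ through open cycles on purpose, to keep its shape a doubled partition, and a generic open-cycle move \emph{does} change the $\tau$-invariant. The substance of the proof is therefore to verify that every particular open cycle invoked is harmless for $\alpha$ --- equivalently, to reconcile the tableau $\T_1$ produced by the algorithm with Garfinkle's framework, e.g.\ by showing it has the same $\tau$-invariant as (indeed is linked by closed-cycle moves to) the ordinary special-shape domino-insertion tableau of the signed involution $\sigma$. With that reconciliation in place, the proposition follows from Garfinkle's theorem that domino insertion preserves $\tau$-invariants, together with the elementary short-root bookkeeping above.
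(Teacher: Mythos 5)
Your overall strategy differs from the paper's: you induct on $n$ by peeling off the element of $\sigma$ containing the largest index and try to reduce everything to a general lemma that the single insertion step carrying $H(\sigma')$ to $H(\sigma)$ preserves $\tau$-membership for roots not touching the inserted indices, whereas the paper simply enumerates all possible configurations of the elements of $\sigma$ containing $i$ and $i+1$ (respectively $1$) --- twelve cases with $\alpha\in\tau(\sigma)$ and eleven with $\alpha\notin\tau(\sigma)$ --- and in each one traces directly where the $i$- and $(i+1)$-dominoes (or the $1$-domino) wind up at the end of the algorithm. Your short-root bookkeeping and your tableau-side characterization of $\tau$ via \cite[2.1.9]{G92} agree with the paper's.

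The gap is that the lemma your induction rests on is exactly the hard part, and you leave it unproved. You correctly flag that cases (4) of Definition \ref{definition:addsingleton} and (2) of Definition \ref{definition:addpair} move $\T_1$ through open cycles and that a generic cycle move changes the $\tau$-invariant; but you then only assert that ``one checks'' each invoked cycle is harmless, and the alternative route you sketch --- that the algorithm's $\T_1$ is linked by closed-cycle moves to Garfinkle's ordinary domino-insertion tableau of $\sigma$, and that closed-cycle moves leave $\tau$ intact --- is itself unestablished and doubtful: the tableau built here is steered by the signs in $\T_2$ and is not a priori the insertion tableau of any fixed word, and moving through a cycle (open or closed) can in general alter the relative positions of consecutive dominoes, which is precisely what $\tau$ measures. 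So the proposal is a plausible plan whose central verification is deferred rather than carried out; to complete it you would end up doing essentially the configuration-by-configuration tracing that constitutes the paper's proof. Note also that when $s_m=(k,n)^\epsilon$ with $k\in\{i,i+1\}$, your ``short list'' of remaining subcases is really the full list of configurations of the partner of the other index and of the relevant signs, so the induction saves less than it appears to.
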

 
 \begin{proof} We enumerate all possible ways in which $\alpha$ can lie in $\tau(\sigma)$, or fail to lie in this set, and then check directly that the conclusion holds in each case.
 \vskip .1in
\noindent Suppose first that $\alpha\in\tau(\sigma)$.
 \vskip .1in
 \begin{tenumerate}
 
 \item If $\alpha=e_{i+1}-e_i$ is compact imaginary, then we must have $(i,\epsilon),(i+1,\epsilon)\in
 \sigma$ for some sign $\epsilon$.  The $i$-domino starts out vertical and in the first double row of
  $\T_1$; eventually either the $i$- and $(i+1)$-dominos wind up horizontal with the first on top of the second, or the $(i+1)$-domino is added vertically to a lower double row.  In both cases the $i$-domino winds up above the $(i+1)$-domino, as desired.  If $\alpha=2e_1$ is compact imaginary, then $(1,\epsilon)\in\sigma$ for some sign $\epsilon$, so that the 1-domino is vertical in $\T_1$.  
  
  \item If $\alpha=e_{i+1}-e_i$ is real, then we must have $(i,i+1)^+\in\sigma$.  It is clear from the algorithm that the $i$-domino winds up below the $(i+1)$-domino.
  
  \item Otherwise $\alpha$ is complex.  If $\alpha=2e_1$, then we must have $(1,j)^-\in\sigma$ for some $j$, and then it is clear that the 1-domino winds up vertical in the first double row of $\T_1$, while the $j$-domino lies below this double row.  
  
  \item We are now reduced to the case where $\alpha=e_{i+1}-e_i, \alpha$ complex.  If\linebreak
   $(i,\epsilon),
  (j,i+1)^{\epsilon'}\in\sigma$ for some $j<i$ for signs $\epsilon,\epsilon'$ and if the $i$-domino is vertical  when adjoined to $\T_1$, then it is added to the end of some double row $R$ such that the double rows above it end in the same sign as $R$ in $\T_2$ (since the $i$-domino was not put into a higher row).  When the $j$-domino is inserted, adding a domino $D$ to the shape of $\T_1$, the additional signs added to $\T_2$, if $D$ is vertical, are both $-\epsilon$, whence the $(i+1)$-domino is now inserted vertically with sign $\epsilon$ and winds up in a row below $R$ (since all higher rows end with the same sign as they did before the $j$-domino was inserted).  If $D$ is horizontal, then it lies in the bottom row of $\T_1$, and once again, the $(i+1)$-domino lies below it.  The argument is similar if instead the $i$-domino ends up horizontal (lying directly below another horizontal domino) when it is adjoined to $\T_1$.
  
  \item If $(i+1,\epsilon),(i,k)^+\in\sigma$ for some $k>i+1$, then the $(i+1)$-domino is vertical at the end of some double row of $\T_1$; the
  $i$-domino is adjoined horizontally either to this double row or a higher one, and if to this double row bumps the $(i+1)$-domino so that it lies below the $i$-domino, as desired.  
  
  \item If $(j,i+1)^\epsilon,(i,k)^+\in\sigma$ and $j<i<k$, then the $i$-domino is added to the first row of $\T_1$, while the 
  $(i+1)$-domino in all cases lies below this row.  A similar argument applies if instead $(j,i)^\epsilon,
  (i+1,k)^-\in\sigma$.
  
  \item If $(j_2,i)^+,(j_1,i+1)^\epsilon\in\sigma$ and $j_1<j_2<i$, then adding the $j_1$-domino bumps the dominos that were previously bumped by adding the $j_2$-domino, together with at least one domino in the double row of the $i$-domino, so that the $(i+1)$-domino winds up below the $i$-domino. 
    
  \item If $(j_1,i)^\epsilon,(j_2,i+1)^-\in\sigma$ and $j_1<j_2<i$, then since the 
  $j_2$-domino is inserted vertically into the first column of $\T_1$, the $(i+1)$-domino is again 
  forced to lie below the $i$-domino, as in the previous case.
  
  \item if $(j_2,i)^+,(j_1,i+1)^-\in\sigma$ and $j_1<j_2$, then again the $(i+1)$-domino winds up below the
  $i$-domino, similarly to the previous case.
  
  \item if $(i+1,k_1)^\epsilon,(i_1,k_2)^+\in\sigma$ and $i+1<k_1<k_2$, then either the
  $i$-domino bumps the $(i+1)$-domino, or else the $i$-domino winds up horizontal in the first double row, while the $(i+1)$-domino winds up vertical in the first column, in either case lying below the
  $i$-domino.
  
  \item if $(i,k_1)^-,(i+1,k_2)^-\in\sigma$ and  $i+1<k_1<k_2$, then the $(i+1)$-domino
  is inserted below the $i$-domino.
  
  \item if $(i,k_1)^+,(i+1,k_2)^-\in\sigma$ and $i+1<k_1<k_2$, then as above the $i$-domino winds up in the first double row while the $(i+1)$-domino winds up in the first column and lies below the former.
  
  \end{tenumerate}
  \vskip .1in
  \noindent This exhausts all cases where $\alpha\in\tau(\sigma)$.  Now suppose the contrary.  The cases where $\alpha=2e_1$ are easily dealt with, so assume that
   $\alpha=e_{i+1}-e_i$.
   \vskip .1in
  
  \begin{tenumerate}
  
  \item If $\alpha$ is noncompact imaginary, so that $(i,\epsilon),(i+1,-\epsilon)\in\sigma$, then the
  $(i+1)$-domino is inserted either next to the $i$-domino or in a higher row.
  
  \item If $(j,i)^-,(i+1,\epsilon)\in\sigma$ and $j<i$, then either the $(i+1)$-domino is
  added vertically at the end of the first double row, or there is at least one double row of odd length ending in $-\epsilon$ in $\T_1$, whence the $(i+1)$-domino is added to a row not below the
  $i$-domino.
  
  \item If $(i,\epsilon),(i+1,k)'+\in\sigma$ and $+1i<k$, then the $(i+1)$-domino is inserted
  horizontally into the first row and cannot lie below the $i$-domino.
  
  \item If $(i+1,\epsilon),(i,k)^-\in\sigma$ and $i<k$ then the $(i+1)$-domino is inserted vertically into the first column and cannot lie below the $i$-domino.
  
  \item If $(j,i)^\epsilon,(i+1,m)^+\in\sigma$ and $j<i$, then the $(i+1)$-domino is inserted horizontally into the first row and cannot lie below the $i$-domino.
  
  \item If $(j_1,i)^\epsilon,(j_2,i+1)^+\in\sigma$ and $j_1<j_2<i$, then either the $(i+1)$-domino is inserted vertically into a double row no further down than the one in which the $i$-domino appears, or the $i$-domino is in the first double row and the $(i+1)$-domino is also inserted into this double row, not directly below the former.  In both cases the $(i+1)$-domino is not below the $i$-domino.
  
  \item If $(j_2,i)^-,(j_1,i+1)^+\in\sigma$ and $j_1>j_2<i$, then the $i$-domino is in the lowest double row of $\T_1$ and the $(i+1)$-domino cannot lie below it.
  
  \item If $(j_2,i)^-,(j_1,i+1)^-\in\sigma$ and $j_1<j_2<i$, then the $i$-domino is in the lowest double row of $\T_1$ and $(i+1)$-domino is not below this double row.
  
  \item If $(i,k_1)^\epsilon,(i+1,k_2)^+\in\sigma$ and $i+1<k_1<k_2$, then the $(i+1)$-domino is inserted in the first double row of $\T_1$ and does not lie below the $i$-domino. 
  
  \item If $(i+1,k_1)^-,(i,k_2)^-\in\sigma$ and $i+1<k_1<k_2$, then the $i$-domino bumps the
   $(i+1)$-domino vertically and does not wind up below the latter.
  
  \item If $(i+1,k_1)^+,(i,k_2)^-\in\sigma, i+1<k_1<k_2$, then the $(i+1)$-domino is inserted into the first double row and cannot lie below the $i$-domino.
  
  \end{tenumerate}
  \vskip .1in
  \noindent This exhausts all cases and concludes the proof.
  
  \end{proof}
  
  \section{$H$ commutes with $T_{\alpha\beta}$}.
  
  \noindent Again following \cite{G93HC}, we complete our program of showing that the map $H$ computes annihilators by showing that it commutes with wall-crossing operators.
  
  \begin{proposition}\label{proposition:wall-cross} Let $\alpha,\beta$ be nonorthogonal simple roots and let $\sigma\in\mathcal S_{n,p}$  lie in the domain of the operator $T_{\alpha\beta}$.  Then $H(T_{\alpha\beta}(\sigma)) = T_{\alpha\beta}(H(\sigma)$.
  \end{proposition}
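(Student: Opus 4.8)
The plan is to imitate the proof of Proposition~\ref{proposition:tau}: for each type of nonorthogonal pair $(\alpha,\beta)$ of simple roots for Sp$(p,q)$, enumerate the local configurations of $\sigma$ that put it in the domain $D_{\alpha\beta}$, and in each such configuration compute $H(T_{\alpha\beta}(\sigma))$ and $T_{\alpha\beta}(H(\sigma))$ directly and compare. There are two types of pair to handle: first, $\{\alpha,\beta\}=\{e_i-e_{i-1},\,e_{i+1}-e_i\}$, where on the tableau side $T_{\alpha\beta}$ acts on $\T_1$ by the rules of \cite[2.1.10]{G92} and leaves the underlying signed tableau of $\overline{\T}_2$ unchanged; and second, $\{\alpha,\beta\}=\{2e_1,\,e_2-e_1\}$, governed by Definition~\ref{definition:Coperator}. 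On the parameter side, $T_{\alpha\beta}(\sigma)$ is the explicit combinatorial move recalled above (replacing $(1,2)^+$ by $1^+,2^-$ or $1^-,2^+$ and conversely; interchanging $1$ and $2$; changing the sign on a pair $(1,i)^\epsilon$; or the analogous operations around the indices $i-1,i,i+1$), and Propositions~\ref{proposition:crossaction} and~\ref{proposition:transform} let me pass freely between these moves, cross actions, and Cayley transforms wherever that shortens a computation.

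First I would reduce to bounded rank. The operator $T_{\alpha\beta}$ alters $\sigma$ only at the indices in a window $M$ of size at most four around $\{1,2\}$ or $\{i-1,i,i+1\}$, so it suffices to understand how such a bounded change propagates through the insertion procedure of Definitions~\ref{definition:addsingleton} and~\ref{definition:addpair}. I would argue by induction on $n$: write $\sigma=\sigma_0$ together with the element or elements carrying the largest index $N$. If $N$ lies above the window, then the last insertion step is the same for $\sigma$ and for $T_{\alpha\beta}(\sigma)$, so I apply the inductive hypothesis to $\sigma_0$ and then check that performing this last step — adjoining a horizontal or vertical domino, with the attendant bumping and, where the definitions call for it, a move through an open cycle together with a sign change in $\overline{\T}_2$ — commutes with the tableau operator $T_{\alpha\beta}$. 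This is plausible because the dominos on which $T_{\alpha\beta}$ acts (the $1$- and $2$-dominos, or the $i$- and $(i\pm1)$-dominos) are already present before the last step, while the cycles moved through in Definition~\ref{definition:Coperator} and in \cite[2.1.10]{G92} lie in regions disjoint from the freshly added square; I would record the short list of ways a bump or such a cycle move can meet these dominos and verify compatibility in each. What remains is a finite list of base cases in which $\sigma$ is supported in the window together with at most one further partner index and $n$ is small.

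For the base cases I would use Proposition~\ref{proposition:tau} as a bookkeeping tool: applied to $T_{\alpha\beta}(\sigma)$, it shows that $H(T_{\alpha\beta}(\sigma))$ and $T_{\alpha\beta}(H(\sigma))$ carry the same $\tau$-invariant, namely the one obtained from $\tau(\sigma)$ by exchanging $\alpha$ and $\beta$. For the small shapes in play only a few domino tableaux realize any prescribed $\tau$-invariant, so it is then enough to pin down the signed content $\overline{\T}_2$ and, in the multivalued cases, to match the two outputs on the two sides as unordered pairs. Besides checking that the underlying signed tableau built from $T_{\alpha\beta}(\sigma)$ agrees with the old one, one must confirm that its equivalence class is the correct one with respect to the new domino tableau $\T_1'$ — the defining equivalence of $\overline{\T}_2$ depends on the open cycles of $\T_1$, which $T_{\alpha\beta}$ generally changes — and this is where the cycle conditions built into the construction of $\overline{\T}_2$ must be used carefully.

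I expect the main obstacle to be the genuinely two-valued situation in the $\{2e_1,\,e_2-e_1\}$ case. Here $T_{\alpha\beta}$ on $\sigma$ is two-valued exactly when $\sigma$ contains $(1,2)^+$, sending it to $\{1^+,2^-\}$ and $\{1^-,2^+\}$; on the tableau side Definition~\ref{definition:Coperator}(1) is two-valued exactly when, after replacing $F_2$ by $F_1$, the $2$-domino lies in an open cycle not containing the $1$-domino across which $\overline{\T}_2$ splits. I would show that these conditions coincide, that the two parameter outputs map under $H$ to the two tableau outputs $(\T_1',\overline{\T}_2')$ and $(\T_1',\overline{\T}_2'')$, and — in the complementary subcase where that cycle is closed, together with the cases covered by Definition~\ref{definition:Coperator}(2) — that moving through the closed cycle on the tableau side and the corresponding single parameter move produce the same single pair. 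The remaining (single-valued) cases, including the entire $\{e_i-e_{i-1},\,e_{i+1}-e_i\}$ family, reduce after the localization above to facts about domino insertion and the Garfinkle wall-crossing operators already established in \cite{G92} and \cite{G90}, which I would cite and adapt rather than reprove.
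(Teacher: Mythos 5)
Your overall strategy---enumerate local configurations, treat the $\{2e_1,e_2-e_1\}$ case via Definition \ref{definition:Coperator} with careful matching of the two-valued outputs, and lean on \cite{G92},\cite{G90} for the type-$A$ part---is the right shape, and your analysis of when $T_{\alpha\beta}$ is two-valued on each side is exactly what is needed there. But the reduction you build the rest of the argument on has a genuine gap. Your induction strips only the single element of $\sigma$ carrying the globally largest index $N$, and only when that element avoids the window; once the largest index belongs to a window element or to a partner of one (e.g.\ $(i,k_2)^+,(i+1,k_3)^+$ with $k_3=n$), the stripping stops, yet $\sigma$ may still contain arbitrarily many elements with indices interleaved between $i+1$ and the partners $k_1,k_2,k_3$. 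These elements are inserted between the insertions of the window elements and genuinely affect the bumping paths, so they cannot be discarded. Hence your ``finite list of base cases with $n$ small'' does not exist, and the subsequent appeal to Proposition \ref{proposition:tau} plus ``only a few domino tableaux realize a prescribed $\tau$-invariant for small shapes'' has nothing to apply to. The finite classification has to be of the relevant \emph{state} of the partially built pair (as in the paper: the tableaux $\tilde\T_1,\tilde\T_2$ built from the terms with indices below $i-1$, classified by the parity and terminal sign of the top double row, whether the $(i\pm1)$-dominos land in the same double row, etc.), not of the rank $n$.

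Second, the commutation of the final insertion step with the tableau-level $T_{\alpha\beta}$, which you treat as a plausibility to be verified, is where essentially all of the content lives, and your justification (``the cycles moved through lie in regions disjoint from the freshly added square'') is not true in general: when $T_{\alpha\beta}$ moves through a closed cycle (Definition \ref{definition:Coperator}) or acts by an $F$-type interchange in the sense of \cite{G92}, the set of dominos it displaces is not confined to a bounded neighborhood of the $1$-, $2$- (or $(i\pm1)$-, $i$-) dominos, and a later insertion's bumping cascade can pass through that cycle. The paper's proof handles this by importing the propagation argument from the proof of \cite[Proposition 4.2.1]{G93HC} together with \cite[2.1.20, 2.1.21]{G92} for all configurations in which no index $i-1,i,i+1$ occurs in a pair $(a,b)^-$ and no $F$-type interchange occurs, and then isolates precisely the $(a,b)^-$ configurations and the (finitely many) subcases with $F$-type interchanges for direct verification. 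Your plan never identifies the $F$-type interchanges as the locus of new difficulty, so as written it would either silently assume the hard commutation or rediscover it case by case without a termination guarantee. To repair the argument, replace the induction on $n$ by the state-based subcase analysis and make the $F$-type interchange and $(a,b)^-$ cases explicit.
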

  
\begin{proof} As in \cite{G93HC} we enumerate all ways in which $\sigma$ can lie in the domain of 
$T_{\alpha\beta}$ and check that the conclusion holds in all cases.  Let $\T_1,\T_2$ respectively denote the domino tableau and a representative of the class of signed tableaux attached to $\sigma$ by the algorithm.
  
Suppose first that $\{\alpha,\beta\}=\{2e_1,e_2-e_1\}$. If $(1,2)^+\in\sigma$, then $F_2\subseteq\T_1$ and $T_{\alpha\beta}(\sigma)$ consists of the two involutions $\sigma_1,\sigma_2$ obtained from $\sigma$ by replacing $(1,2)^+$ by $(1,+),(2,-)$ and $(1,-),(2,+)$ in turn.  Clearly the domino tableaux attached to $\sigma_1,\sigma_2$ are both equal to the tableau $\T_1'$ defined in Definition 6.1 (1).  The signed tableaux attached to these involutions at the second step of the algorithm are not equivalent at that step, whence by the algorithm they remain
  inequivalent at its end.  Hence $H(\sigma_1)\ne H(\sigma_2)$, as desired.  The cases where $(1,+),(2,-)\in\sigma$ or $(1,-),(2,+)\in\sigma$ are similar.  Finally, in the case where at least one of the indices 1 and 2 is paired with another index but 1 and 2 are not paired with each other, one clearly moves the 1- and 2-dominos in $\T_1$ in the desired fashion, whence one can check that if any other dominos move, they are the ones in the closed cycle containing the 2-domino of $\T_1$ and in fact the two domino tableaux produced are those specified by Definition 6.1 (cf.\ \cite[2.2.9,2.3.7]{G92}). If the 2-domino of $\T_1$ does not lie in a closed cycle, then only one domino tableau is produced, which again agrees with that given by this definition.
  
Henceforth we assume that $\alpha=e_i-e_{i-1},\beta=e_{i+1}-e_i$ for some $i\ge2$.  Set $\sigma' = T_{\alpha\beta}(\sigma)$ and let $\T_1',\T_2'$ respectively denote the domino tableau and a representative of the class of signed tableaux attached to $\sigma'$ by the algorithm.  The cases in our discussion below are parallel to the corresponding cases in the proof of \cite[Proposition 4.2.1]{G93HC}.  That proof shows that the desired result holds whenever none of the indices $i-1,i,i+1$ occurs in an ordered pair $(a,b)^-$ in $\sigma$ and $T_{\alpha\beta}$ does not act on $\T_1$ by an $F$-type interchange in the sense of \cite{G92}, using \cite[2.1.20,2.1.21]{G92} in place of the results in Section 2.5 of \cite{G93HC}:  in all cases either the $(i-1)$- and $i$- or $i$- and $(i+1)$-dominos are interchanged in $\T_1$, whichever of these has the desired effect on $\tau$-invariants.  Apart from this one changes signs and moves through open cycles in the same way in the constructions of $\T_1,\T_1'$ and $\T_2,\T_2'$, so that $\T_2'$ is equivalent to $\T_2$, as desired.  If an ordered pair $(a,b)^-$ involving one of the indices $i-1,i,i+1$ does occur in $\sigma$, then one checks directly that $\T_1'$ is obtained from $\T_1$ by either interchanging the $(i-1)$- and $i$- or $i$- and $(i+1)$-dominos and we may take $\T_2'=\T_2$, as desired; note that ordered pairs $(a,b)^-$ have no analogue in \cite{G93HC}..  It only remains to show that the desired result holds whenever $\T_{\alpha\beta}$ acts on $\T_1$ by an $F$-type interchange (again, there is no analogue of such an interchange in \cite{G93HC}).   In each case below, we indicate how many subcases involve an $F$-type interchange; then the result follows by a direct calculation in each such subcase.  Throughout we denote by $j,j_1,j_2,j_3$ indices less than $i-1$ with $j_1<j_2<j_3$, and similarly by $k,k_1,k_2,k_3$ indices greater than $i+1$ with $k_1<k_2<k_3$.    
\vskip .1in
\begin{tenumerate}

\item Suppose first that $(i-1,\epsilon),(i,-\epsilon),(i+1,-\epsilon)$ all lie in $\sigma$ for some sign $\epsilon$, which for definiteness we take to be $+$.  Then $\sigma'$ is obtained from $\sigma$ by replacing the terms $(i-1,+),(i,-)$ by the single term $(i-1,i)^+$.  Let $\tilde\sigma$ consist of the terms of $\sigma$ involving only indices less than $i-1$ and let $\tilde\T_1,\tilde\T_2$ be the domino and representative of the class of signed tableaux attached to $\tilde\sigma$ by the algorithm.   There are four subcases, according as the top double row of $\tilde\T_2$ has even or odd length and ends with $+$ or $-$, but only one of these has $\T_{\alpha\beta}$ acting on $\T_1$ by an $F$-type interchange.  One checks directly that the conclusion holds in this case.

\item If instead $(i-1,\epsilon),(i,i+1)^+\in\sigma$, so that $(i-1,\epsilon),(i,\epsilon),(i+1,-\epsilon))\in\sigma'$, then again only one subcase out of four has $T_{\alpha\beta}$ acting on $\T_1$ by an $F$-type interchange, and the conclusion holds in that case.

\item If $(j,i-1)^\epsilon,(i,i+1)^+\in\sigma$, so that $(j,i)^\epsilon,(i-1,i+1)^+\in\sigma'$, then no $F$-type interchange ever takes place.

\item If $(i-1,i+1)^\epsilon,(i,k)^+\in\sigma$, so that $(i-1,i)^\epsilon,(i+1,k)^+\in\sigma'$, then no $F$-type interchange ever takes place.

\item If $(j,i-1)^\epsilon,(i,\epsilon'),(i+1,\epsilon')\in\sigma$, so that $(i-1,\epsilon'),(j,i)^\epsilon),
((i+1),\epsilon')\in\sigma'$, then there are eight subcases, depending as in case 1 on the length parity and sign at the end of the top double row, and this time also on whether the $i-1$- and $i+1$-dominos are bumped into the same double row.  Two subcases involve an $F$-type interchange and the desired result holds in both of them.

\item If $(i-1,\epsilon),(i,-\epsilon),(j,i+1)^{\epsilon'}\in\sigma$, so that $(i-1,\epsilon),(j,i)^{\epsilon'},
(i+1,-\epsilon)\in\sigma'$, then no $F$-type interchange takes place.

\item If $(i-1,\epsilon),(i+1,\epsilon),(i,k)^+\in\sigma$, so that $(i-1,\epsilon),(i,\epsilon),(i+1,k)^+\in\sigma'$, then there is one case where an $F$-type interchange occurs and the result holds in that case.

\item If $(i-1,\epsilon),(i+1,-\epsilon),(i,k)^+\in\sigma$, so that $(i,\epsilon),(i+1,-\epsilon),(i-1,k)^+\in\sigma'$, then an $F$-type interchange always occurs and the result holds in all cases.

\item If $(j_1,i-1)^+,(i,\epsilon'),(j_2,i+1)^+\in\sigma$, so that $(i-1,\epsilon),(j_1,i)^+,(j_2,i+1)^+\in\sigma'$, then there is one case where an $F$-type interchange occurs and the result holds in it.

\item If $(j_2,i-1)^+,(i,\epsilon),(j_1,i+1)^+\in\sigma$, so that $(j_2,i-1)^+,(j_1,i)^+,(i+1,\epsilon)^+\in\sigma'$, then an $F$-type interchange never arises.

\item If $(j,i-1)^+,(i+1,\epsilon),(i,k)^+\in\sigma$, so that $(j,i)^+,(i+1,\epsilon),(i-1,k)^+\in\sigma'$, then an $F$-type interchange never arises.

\item If $(i-1,\epsilon),(j,i+1)^+,(i,k)^+\in\sigma$, so that $(i-1,\epsilon),(j,i)^+,(i+1,k)^+\in\sigma'$, then an $F$-type interchange never arises.

\item If $((i+1,\epsilon),(i-1,k_1)^+,(i,k_2)^+\in\sigma$, so that $(i,\epsilon),(i-1,k_1),(i+1,k_2)\in\sigma'$, then there are two subcases where an $F$-type interchange arises and the result holds in both of them.

\item If $(i-1,\epsilon),(i+1,k_1)^+,(i,k_2)^+\in\sigma$, so that $((i,\epsilon),(i+1,k_1^+),(i-1,k_2)^+\in\sigma'$, then there are two subcases where an $F$-type interchange arises and the result holds in both of them.

\item If $((j_2,i-1)^+,(j_3,i)^+,(j_1,i+1)^+\in\sigma$, so that $(j_2,i-1)^+,(j_1,i)^+,(j_3,i+1)^+\in\sigma'$, then no $F$-type interchange occurs.

\item If $(j_1,i-1)^+,(j_3,i)^+,(j_2,i+1)^+\in\sigma$, so that $(j_3,i-1)^+,(j_1,i)^+,(j_2,i+1)\in\sigma'$, then no $F$-type interchange toccurse.

\item If $(j_2,i-1)^+,(j_i,i+1)^+,(i,k)^+\in\sigma$, so that $(j_2,i-1)^+,(j_1,i)^+,(i+1,k)^+\in\sigma'$, then no $F$-type interchange occurs.

\item If $(j_1,i-1)^+,(j_2,i)^+,(i,k)^+\in\sigma$, so that $(j_1,i)^+,(j_2,i+1)^+,(i-1,k)^+\in\sigma'$, then no $F$-type interchange occurs.

\item If $(j,i+1)^+,(i-1,k_1),(i,k_2)^+\in\sigma$, so that $(j,i)^+,(i-1,k_1)^+,(i+1,k_2)^+\in\sigma'$, then no $F$-type interchange occurs.

\item If $(j,i-1)^+,(i+1,k_1)^+,(i,k_2)^+\in\sigma$, so that $(j_i^+,(i+1,k_1)^+,(i-1,k_2)^+\in\sigma'$, then no $F$-type interchange occurs.

\item If $(i+1,k_1)^+,(i-1,k_2)^+(,i,k_3)^+\in\sigma$, so that $(i,k_1)^+,(i-1,k_2)^+,(i+1,k_3)^+\in\sigma'$, then no $F$-type interchange occurs.

\item If $(i-1,k_1)^+,(i+1,k_2)^+,(i,k_3)^+\in\sigma$, so that $((i,k_1)^+,(i+1,k_2)^+,(i-1,k_3)^+\in\sigma'$, then no $F$-type interchange occurs.
 
  \end{tenumerate}
  \vskip .1in
  \noindent This exhausts all cases and concludes the proof.
  
  \end{proof}
 
  \begin{theorem}\label{theorem:annihilators} Let $\sigma\in\mathcal S_{n,p}$.  Then the first coordinate $\T_1$ of $H(\sigma)$ parametrizes the annihilator of the simple Harish-Chandra module corresponding to $\sigma$ via the classification of \cite[Theorem 3.5.11]{G93}.
  \end{theorem}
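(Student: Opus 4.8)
The plan is to deduce Theorem \ref{theorem:annihilators} from the two propositions just established, together with the classification of primitive ideals by generalized $\tau$-invariants in \cite{G93}. First I would recall the setup: there is a single block of simple Harish-Chandra modules for $\text{Sp}(p,q)$ with trivial infinitesimal character (Section 1), and the map $\sigma \mapsto \text{Ann}(X_\sigma)$ is a well-defined function from $\mathcal S_{n,p}$ to the set of primitive ideals with this infinitesimal character. By \cite[Theorem 3.5.11]{G93}, these primitive ideals are in bijection with domino tableaux of doubled-partition shape (equivalently, with left cells in the relevant Weyl group quotient), and two such ideals coincide if and only if the corresponding tableaux have the same generalized $\tau$-invariant; moreover the wall-crossing operators $T_{\alpha\beta}$ on tableaux realize the same combinatorics as the $T_{\alpha\beta}$ on primitive ideals (or equivalently on the Weyl group elements parametrizing them).

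The key steps, in order, are as follows. Step one: observe that the assignment $\sigma \mapsto \T_1(\sigma)$ (the first coordinate of $H(\sigma)$) and the assignment $\sigma \mapsto \text{Ann}(X_\sigma)$ (read through the bijection of \cite{G93}) are two maps out of $\mathcal S_{n,p}$ that, by Propositions \ref{proposition:tau} and \ref{proposition:wall-cross}, both intertwine the $\tau$-invariant and all wall-crossing operators $T_{\alpha\beta}$ with the corresponding operations on domino tableaux. Step two: invoke the standard fact (going back to the theory of primitive ideals and cells, and used in exactly this way in \cite{G93HC}) that the $\tau$-invariant together with the operators $T_{\alpha\beta}$ act transitively — in the appropriate graph-theoretic sense — so that two functions on the block agreeing after applying $\tau$ and compatible with all $T_{\alpha\beta}$ must agree everywhere once they agree on a single generating parameter, provided the target is separated by generalized $\tau$-invariants. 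Step three: check the base case on a distinguished $\sigma$ (for instance the one with all singletons, or the one whose $H$-image is the unique tableau of a given shape with a minimal $\tau$-invariant) where the annihilator is computed directly — this is the holonomicity/associated-variety bookkeeping that pins down one value. Step four: since \cite[Theorem 3.5.11]{G93} says primitive ideals of this infinitesimal character are distinguished by their generalized $\tau$-invariants and $H(\sigma)$ has, by the two propositions, the same generalized $\tau$-invariant as $\text{Ann}(X_\sigma)$, conclude that $\T_1(\sigma)$ parametrizes $\text{Ann}(X_\sigma)$ for every $\sigma$.

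I expect the main obstacle to be Step three together with making Step two precise: one must verify that the combinatorial graph on $\mathcal S_{n,p}$ generated by $\tau$ and the $T_{\alpha\beta}$ is connected (so that a single base-case computation propagates everywhere), and one must identify at least one parameter whose annihilator is known independently — typically the trivial or a spherical module, or a module whose annihilator is the augmentation-type ideal whose tableau is a single (doubled) row or column. This is exactly the step where the structure of \cite{G93HC} is followed most closely; here it is slightly more delicate because of the domino combinatorics and the appearance of ordered pairs $(a,b)^-$, which have no analogue in the $SU(p,q)$ case, so one must confirm that the connectivity argument is not disrupted by cases where $T_{\alpha\beta}$ is two-valued. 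Once connectivity and one base value are in hand, the theorem is immediate from the generalized $\tau$-invariant classification, so I would present the argument as: (i) reduce to a base case via the two propositions; (ii) settle the base case; (iii) cite \cite[Theorem 3.5.11]{G93}.
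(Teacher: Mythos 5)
Your approach is the right one and matches the paper's in its essentials: the theorem follows from Propositions \ref{proposition:tau} and \ref{proposition:wall-cross} together with the fact that primitive ideals of trivial infinitesimal character in type $C$ are uniquely determined by their generalized $\tau$-invariants (the paper cites \cite[Theorem 3.5.9]{G93} for this). However, your Steps two and three --- the connectivity argument and the base-case computation on a distinguished parameter --- are not needed, and identifying them as the main obstacle misplaces where the work lies. The generalized $\tau$-invariant is defined recursively from the $\tau$-invariant and the operators $T_{\alpha\beta}$, so the two propositions already imply directly that the module $X$ corresponding to $\sigma$ and the primitive ideal $I$ parametrized by $\T_1$ have the same generalized $\tau$-invariant; since $\operatorname{Ann}(X)$ is a primitive ideal with the same generalized $\tau$-invariant as $X$, and primitive ideals at this infinitesimal character are \emph{separated} by their generalized $\tau$-invariants, one concludes $\operatorname{Ann}(X)=I$ with no need to pin down a single value and propagate it. In other words, the classification result is a uniqueness statement strong enough to replace both the connectivity argument and the base case; the genuine labor of the proof is entirely contained in the two propositions already established. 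Your worry about the two-valued $T_{\alpha\beta}$ disrupting connectivity is therefore moot for this theorem, though it is relevant to the case analysis inside Proposition \ref{proposition:wall-cross} itself.
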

  
  \begin{proof} Thanks to Propositions 7.1 and 8.1, we know that the primitive ideal $I$ corresponding to $\T_1$ has the same generalized $\tau$-invariant as the Harish-Chandra module $X$ corresponding to $\sigma$, whence by \cite[Theorem 3.5.9]{G93} $I$ is indeed the annihilator of $X$, since primitive ideals of trivial infinitesimal character in type $C$ are uniquely determined by their generalized $\tau$-invariants.
  \end{proof}
  
  \noindent We also see that, since the wall-crossing operators $T_{\alpha\beta}$ generate the Harish-Chandra cells for Sp$(p,q)$ \cite[Theorem 1]{McG98}, modules in the same Harish-Chandra cell for this group (and trivial infinitesimal character) have the same signed tableaux $\T_2$ attached to them, up to changing the signs in double rows whose rows have even length.
  
  \section{$H$ is a bijection}
  
  \begin{theorem}\label{theorem:bijection} The map $H$ defines a bijection between $\mathcal S_{n,p}$ and ordered pairs $(\T_1,\overline{\T}_2)$, where $\T_1$ is a domino tableau with shape a doubled partition of $2(p+q)$ and $\overline{\T}_2$ is an equivalence class of signed tableaux of signature $(2p,2q)$ and the same shape as $\T_1$.
  \end{theorem}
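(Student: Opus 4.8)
\section*{Proof proposal}

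The plan is to prove that $H$ is a bijection by exhibiting its inverse explicitly, by induction on $n=p+q$, essentially running the algorithm of Definitions 5.1 and 5.2 backwards; checking that this reverse procedure is well defined will yield injectivity and surjectivity simultaneously. The base case $n=0$ matches the empty pair of tableaux with the empty signed involution.

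For the inductive step fix a pair $(\T_1,\overline{\T}_2)$ of the required type, so $\T_1$ is a standard domino tableau of doubled-partition shape with $n$ dominos and $\overline{\T}_2$ has that shape and signature $(2p,2q)$. Since the elements of any $\sigma\in\mathcal S_{n,p}$ are processed in order of their largest entry and $n$ is the global maximum, the last element inserted is a singleton $(n,\epsilon)$ or a pair $(i,n)^\epsilon$ with $i<n$; moreover in every clause of Definitions 5.1 and 5.2 the domino labelled $n$ is \emph{appended} at the end of a row and never subsequently bumped, because when it is adjoined every domino already present carries a label $<n$ (when the last element is a pair, the domino labelled $i$ is inserted via \cite{G90} with bumping, but the larger domino, labelled $n$, is merely adjoined). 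Being the maximal label, the domino labelled $n$ is therefore a corner domino of $\T_1$, and from its orientation and location, the signs occurring in the rows it meets in a chosen representative $\T_2$, and the cycle structure of $\T_1$, one reads off which clause produced it: singleton versus pair, the sign $\epsilon$, whether a cycle was moved through, and whether a new double row was created. One then reverses that clause: delete the domino labelled $n$ (and, in the singleton case, the trailing sign of the relevant double row of $\T_2$), undo the prescribed open- or closed-cycle move, and, in the pair case, perform reverse domino-bumping in the sense of \cite{G90} to remove the domino labelled $i$ from the first row if $\epsilon=+$ or from the first column if $\epsilon=-$. The result is a pair of the same type, of size $n-1$ or $n-2$ and indexed by $\{1,\dots,n-1\}$ or $\{1,\dots,n\}\setminus\{i\}$ with correspondingly adjusted signature --- which is exactly why the families $\mathcal S_{M,p}$ were introduced. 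By the inductive hypothesis this smaller pair comes from a unique signed involution, to which we re-adjoin the identified element; applying $H$ to the result returns $(\T_1,\overline{\T}_2)$, and any $\sigma$ with $H(\sigma)=(\T_1,\overline{\T}_2)$ must have arisen this way.

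The main obstacle is to verify, in a case analysis parallel to Definitions 5.1 and 5.2, that this reverse step is genuinely well defined: that $(\T_1,\overline{\T}_2)$ determines unambiguously the clause, the sign $\epsilon$, the cycles moved through, and the domino extracted by reverse \cite{G90}-bumping --- it should be the unique domino whose removal, after undoing the prescribed cycle move, leaves a tableau of doubled-partition shape from which it can have been inserted into the first row (resp. column) --- and that the smaller pair is again of the required type. Two points need care. First, the open-cycle moves must be reversible \emph{within} the class of tableaux of doubled-partition shape; here one re-uses the observation of Definition 6.1(2) that the relevant cycles cannot be simultaneously up and down cycles. Second, the sign choices ``decreed'' in Definitions 5.1(4) and 5.2(1),(3), and the freedom in the choice of representative $\T_2$, must be shown to be absorbed exactly by the equivalence defining $\overline{\T}_2$, so that the reverse step is independent of them; this is precisely why the codomain of $H$ is taken to consist of classes $\overline{\T}_2$ rather than of individual signed tableaux. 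As an independent consistency check one may compare cardinalities: the description of $\mathcal D$ in Section 2 gives $|\mathcal S_{n,p}|=\sum_{k=0}^{p}\frac{(p+q)!}{k!\,(p-k)!\,(q-k)!}$, and this can be matched with the total number of pairs $(\T_1,\overline{\T}_2)$ by stratifying according to the shape $\mu\cup\mu$ of $\T_1$, counting standard domino tableaux of that shape via the $2$-quotient, and counting admissible classes $\overline{\T}_2$ through the starting signs of the odd-length double rows modulo the stated equivalence.
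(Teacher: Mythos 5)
Your overall strategy---invert the algorithm step by step, by induction on $n$---is a legitimate route in principle, and your inductive step for peeling off the $n$-domino is close in spirit to what the paper actually does for \emph{surjectivity}: the paper also removes the $n$-domino, splits into cases on its orientation and on the signs and cycle structure of the double rows above it, and uses reverse insertion via \cite[1.2.13]{G90} to produce \emph{some} preimage. But the paper only ever claims existence of a preimage at each stage; it proves \emph{injectivity} by an entirely different argument, namely a cardinality count: using \cite{McG98} (Lusztig symbols, the complex double cell of Springer type, and Corollary 3 there) it shows that the number of classes $\overline{\T}_2$ of a fixed shape relative to a fixed $\T_1$ equals the number of modules in a Harish-Chandra cell with the corresponding annihilator, so domain and range of $H$ have the same cardinality and surjectivity forces bijectivity.

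The genuine gap in your proposal is that the ``main obstacle'' you identify---well-definedness of the reverse step---\emph{is} the injectivity statement, and you do not actually discharge it; you only name the two points that need care. Concretely: (i) nothing in your argument shows that the pair $(\T_1,\overline{\T}_2)$ determines whether the $n$-domino arose from a singleton $(n,\epsilon)$ or from a pair $(i,n)^\epsilon$ (a vertical $n$-domino at the end of a double row can occur in clauses of both Definition 5.1 and Definition 5.2); (ii) in the pair case you must recover not just the $n$-domino but the growth square $D$ of the preceding bumping insertion in order to reverse it, and $D$'s location is not marked in the final tableau after the intervening cycle moves of Definition 5.2(2); (iii) independence of the chosen representative $\T_2$ and of the decreed signs in Definitions 5.1(4) and 5.2(1),(3) is asserted, not verified. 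Each of these requires a case analysis at least as long as the one the paper carries out for surjectivity, and until it is done you have not proved that $H$ is injective. Either carry out that analysis in full, or replace it---as the paper does---with a counting argument; note that your proposed ``consistency check'' (matching $\sum_{k}\frac{(p+q)!}{k!\,(p-k)!\,(q-k)!}$ against a count of pairs via $2$-quotients and sign classes) is itself only sketched, whereas if completed it would suffice on its own to upgrade surjectivity to bijectivity.
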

  
  \begin{proof} We first show that any ordered pair $(\T_1,\overline{\T}_2)$ as in the hypothesis lies in the range of $H$, by induction on $p+q$.  Assuming that this holds for all pairs $(\T_1,\overline{\T}_2)$ if $\T_1$ has fewer than $n=p+q$ dominos, let $\T_1,\overline{\T}_2$ be a pair with $n$ dominos in $\T_1$.  Let $\T_1'$ be $\T_1$ with the $n$-domino removed.
  
If the $n$-domino in $\T_1$ is horizontal and lies in a row of even length, then the next to last row $R$ of $\T_1'$ has two more squares than its last row. By \cite[1.2.13]{G90}, there is a domino tableau $\T$ whose shape is that of $\T_1'$ with the last two squares removed from $R$ such that inserting a horizontal $i$-domino for a suitable index $i$ into the first row of $\T$ produces the tableau $\T_1'$, or else there is such a tableau $\T$ and an index $i$ such that inserting a vertical $i$-domino into the first column of $\T$ produces $\T_1'$.  In either case there is a pair $(\T_1'',\overline{\T}_2'')=H(\sigma')$ in the range of $H$, and if we add $(i,n)^+$ or $(i,n)^-$ to $\sigma'$ to get $\sigma$ (the first pair if the $i$-domino is horizontal, the second if it is vertical), then $H(\sigma) = (\T_1,\overline{\T}_2)$, as desired.  If instead the $n$-domino in $\T_1$ is horizontal but lies in a row of odd length then we can move $\T_1'$ through a suitable open cycle to produce a new tableau $\T_1''$ with shape a doubled partition such that the shape of $\T_1$ differs from that of $\T_1$ by a single vertical domino.  We then reduce to the case covered in the following paragraph.
 
Now suppose that the $n$-domino in $\T_1$ is vertical, so that the shape of $\T_1'$ is that of a doubled partition.  Let $\T_2$ be a representative of $\overline{\T}_2$; assume for definiteness that the squares in $\T_2$ corresponding to those of the $n$-domino in $\T_1$ are labelled $+$.  Look at all the double rows in $\T_2$ above the one corresponding to the double row with the $n$-domino in.$\T_1$.  If every such double row consists of rows of odd length ending in $+$, then one checks immediately that there is a class $\overline{\T}_2'$ such that the pair 
$(\T_1,\overline{\T}_2')=H(\sigma')$ lies in the range of $H$ and if we add $(n,+)$ to $\sigma'$ the resulting involution $\sigma$ satisfies $H(\sigma) = (\T_1,\overline{\T}_2)$ as desired.  Otherwise, if the lowest such double row $D$ has rows of odd length and ends in $-$, let $\tilde\T_1'$ be obtained from $\T_1'$ by removing the last squares of the rows of $D$.  There is a domino tableau $\T$ with the same shape as $\tilde\T_1'$ and an index $i$ such that inserting a suitably oriented $i$-domino into $\T$ gives $\T'$.  As above there is a class $\overline{\T}_2''$ of signed tableaux such that  $(\T,\overline{\T}_2'') = H(\sigma')$ and then there is $\sigma$ with $H(\sigma) = (\T_1,\overline{\T}_2)$, as desired.  If the lowest such double row has rows of even length ending in $+$, then look at the open cycle through the largest domino in the corresponding double row.   The argument of the last paragraph produces the desired $\sigma$.  If the lowest such double row $D$ has rows of even length ending in $-$, then look at the open cycle of $\T_1'$ through the largest domino in the corresponding double row.  If this open cycle has its hole and corner in different double rows $D_1,D_2$, then change all signs in these double rows of $\T_2$ and argue as in the previous case.  Finally, if this open cycle has its hole and corner  both in $D$, then move through this open cycle in $\T_1'$ and argue as in the case where the $n$-domino in $\T_1$ is horizontal.  In this case adjoining the $i$-domino initially produces a domino tableau where the first row of $D$ has length two more than its second row; moving through the open cycle, as specified by Definition 5.2 (2), gives $D$ the shape it has in $\T_1$ and then bumping the $n$-domino into the next lower double row yields $\T_1$, as desired.

Now we know that $H$ is surjective.  To show that it is injective, it is enough to show that its domain and range have the same cardinality.  To this end, we appeal to \cite{McG98}.  The cells of Harish-Chandra modules for Sp$(p,q)$ span complex vector spaces which carry the structure of representations of the Weyl group $W$ of type $C_{p+q}$.  Let ${\mathbf p}$ be a doubled partition of $2(p+q)$, with Lusztig symbol $s$, and let $\pi$ be the corresponding irreducible representation of $W$.  Enumerate the distinct even parts of $\mathbf p$ as $r_1,\ldots,r_k$ and denote by $\mathbf p_1,\ldots,\mathbf p_{2^k}$ the $2^k$ partitions obtained from $\mathbf p$ by either replacing the block $r_i\ldots,r_i$ of parts of $\mathbf p$ equal to $r_i$ by $r_i+1,r_i,\ldots,r_i,r_i-1$ or leaving this block unchanged, for all $i$ between 1 and $k$.  Then the $\mathbf p_i$ correspond (via their Lusztig symbols) to the representations in the complex double cell of $\pi$ of Springer type in the sense of \cite{McG98}.  From this and \cite[Corollary 3]{McG98} it follows that the number of equivalence classes ${\overline \T}_2$ of shape $\mathbf p$ relative to a fixed domino tableau $\T_1$ of this shape equals the number of modules in any Harish-Chandra cell $C$ with annihilator the primitive ideal corresponding to $\T_1$, provided that $C$ has at least one such module.  Hence the domain and range of $H$ have the same cardinality and $H$ is a bijection.
\end{proof}

\noindent  Fix a signed tableau $\T_2'$ whose rows of even length all begin with $+$.  It follows that signed involutions $\sigma$ such that the normalization (in the sense of the paragraph after Definition 5.2) of the second coordinate of $H(\sigma)$ is $\T_2'$ correspond bijectively to modules in a Harish-Chandra cell for Sp$(p,q)$ and that all such cells (of modules with trivial infinitesimal character) arise in this way; in particular, and in accordance with \cite[Theorem 6]{McG98}, there are as many such cells as there are nilpotent orbits in $\mathfrak g_0$.  It remains to show that all modules in the cell corresponding to $\T_2'$ have associated variety equal to the closure of the corresponding $K$-orbit in $\frak p$ via \cite[9.3.5]{CM93}.  This we will do in the next and final section.

\section{Associated varieties}

\noindent Our final result is

\begin{theorem}\label{theorem:associated varieties} Let $\sigma\in\mathcal S_{n,p}$ correspond to the Harish-Chandra module $Z$.  Then the associated variety of $Z$ is the closure of the $K$-orbit corresponding to $\T_2'$, where $H(\sigma)=(\T_1,\overline{\T}_2), \T_2$ is a representative of
$\overline{\T}_2$, and $\T_2'$ is its normalization as defined after Definition 5.2 (obtained from $\T_2$ by changing signs as necessary in all rows of even length so that they begin with $+$).
\end{theorem}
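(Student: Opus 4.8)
The plan is to follow the pattern of the annihilator case: show that the associated variety is a Harish-Chandra cell invariant, reduce to one parameter in each cell, and compute it there directly. By \cite[5.2]{T07} the associated variety $\mathrm{AV}(Z)$ is the closure of a single nilpotent $K$-orbit $\mathcal{O}_K\subseteq\mathfrak p^*$, and it is known that $\mathcal O_K$ is constant on each Harish-Chandra cell: the wall-crossing functors $T_{\alpha\beta}$, which generate the cells by \cite[Theorem 1]{McG98}, do not move it (translation functors do not enlarge associated varieties, and the cell relation is symmetric). On the combinatorial side, Proposition 8.1 together with the remark following Theorem 8.2 shows that the normalized tableau $\T_2'$ is likewise a cell invariant. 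By Theorem 9.1 and \cite[Theorem 6]{McG98}, both $C\mapsto\mathcal O_K$ and $C\mapsto\T_2'$ are then bijections from the set of cells of trivial infinitesimal character onto the set of nilpotent $K$-orbits in $\mathfrak p^*$, the latter identified with normalized signed tableaux of signature $(2p,2q)$ via \cite[9.3.5]{CM93}. The theorem asserts that these two bijections coincide, so it suffices to verify $H$'s prediction for one well-chosen parameter in each cell. One further simplification: by Theorem 8.2 the complex orbit $G^{\mathbb C}\!\cdot\mathcal O_K$ is already determined by $\T_1$, and its Jordan type is the shape of $\overline{\T}_2$; hence the shape of $\T_2'$ is automatically correct, and only the sign pattern --- the real form of the complex orbit --- remains to be matched.

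For the representatives I would take, in each cell, the parameter $\sigma$ assembled as far as possible from pairs $(i,i+1)^+$ together with singletons $(j,\pm)$. Then $Z_\sigma$ is cohomologically induced from a $\theta$-stable parabolic whose Levi factor is a product of copies of $U(1)$ and $SU(1,1)$ times a smaller symplectic group, with inducing data a (limit of) discrete series attached to the compact Cartan of that symplectic factor; the associated varieties of such modules are classical --- available from character asymptotics (the wavefront set) or directly from \cite{T07} --- and are given by an explicit signed tableau. One then runs Definitions 5.1 and 5.2 on this $\sigma$ and checks, step by step, that the resulting normalized $\overline{\T}_2$ is exactly that tableau. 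The bumping and open-cycle clauses of Definition 5.2 are precisely the combinatorial record of how the nilpotent orbit changes under each derived-functor step, so this amounts to a finite (if laborious) verification of the kind carried out for $SU(p,q)$ in \cite{G93HC}. That this family of parameters meets every cell follows from the surjectivity argument inside the proof of Theorem 9.1 together with the connectivity statement of \cite[Theorem 1]{McG98}. Combining the two steps, the two bijections agree on a representative of every cell, hence everywhere, which is the assertion of the theorem. (Should cell-constancy of $\mathrm{AV}$ prove awkward to cite, an alternative is to compute $\mathrm{AV}(Z_\sigma)$ by induction on the number of elements of $\sigma$, using the same cohomological-induction description of each step of the algorithm; cell-constancy then emerges as a by-product.)

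I expect the base-case verification to be the main obstacle. The delicate part is reconciling sign conventions: the Kostant--Sekiguchi normalization and the rule that even rows begin with $+$ in \cite[9.3.5]{CM93} must be matched with the implicit sign bookkeeping of Definitions 5.1 and 5.2, and the open-cycle clauses of Definition 5.2 --- where moving $\T_1$ through a cycle changes the shape --- have to be aligned with the corresponding change of real nilpotent orbit under the derived-functor construction. A secondary point is the appeal to cell-constancy of the associated variety: while standard, this rests on properties of translation functors and of the cell partition that must be invoked with care, and it is precisely here that the symmetry of the cell relation is used to turn ``translation does not enlarge $\mathrm{AV}$'' into ``$\mathrm{AV}$ is cell-constant.''
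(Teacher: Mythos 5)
Your first paragraph matches the paper's strategy: both sides of the asserted equality are Harish-Chandra cell invariants, so it suffices to verify the theorem on one well-chosen module per cell, and the natural candidates are cohomologically induced modules whose associated varieties are known in advance. The paper does exactly this, taking as base cases the modules $A_{\mathfrak q}$ attached to $\theta$-stable parabolics with Levi of the form Sp$(p',q')\times U(p_1,q_1)\times\cdots\times U(p_r,q_r)$, whose associated varieties are closures of Richardson orbits by \cite{T05}, writing down their signed involutions explicitly, and running the algorithm on them; it then propagates the result along cohomological induction using Trapa's formula for the induced orbit. (One small caution on your reduction: you do not need, and should not claim without proof, that $C\mapsto\mathcal O_K$ is a bijection from cells to nilpotent $K$-orbits --- injectivity of that map is not citable and is essentially a consequence of the theorem; cell-constancy of both sides plus a per-cell verification is all the logic requires.)

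The genuine gap is in your base case. Not every nilpotent orbit of Sp$(p,q)$ is Richardson, so not every cell contains an $A_{\mathfrak q}$ module, and your proposed representatives --- parameters built from pairs $(i,i+1)^+$ and singletons, inducing from a discrete series on a smaller symplectic factor --- do not rescue this: the associated variety (equivalently the wavefront set) of a discrete series of Sp$(p',q')$ is not ``classical'' in the sense of being independently available in the form needed here, and the claim that this family meets every cell does not follow from the surjectivity argument in Theorem 9.1. The paper's essential device, absent from your proposal, is to go \emph{up} in rank rather than down: using \cite[Proposition 2.3 (3)]{T05} and induction by stages, any orbit $\mathcal O$ of Sp$(p,q)$ induces to an orbit $\mathcal O'$ of a larger group $G'$ all of whose even parts have multiplicity at most $4$, hence Richardson by \cite[Corollary 5.2]{T05}; the theorem is then known for the correspondingly induced module $Z'$, and one descends to $Z$ because $\mathcal O$ is the unique orbit inducing to $\mathcal O'$ from the relevant Levi. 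Without this (or some substitute supplying an independently computable associated variety in every cell), your verification cannot be completed, and your fallback of inducting on the number of elements of $\sigma$ fails for the same reason: the individual steps of Definitions 5.1 and 5.2 are not each realized by a cohomological induction whose effect on associated varieties is known.
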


\begin{proof}
Let $\mathfrak q$ be a $\theta$-stable parabolic subalgebra of $\mathfrak g$ whose corresponding Levi subgroup of $G$ is Sp$(p',q')\times U(p_1,q_1)\times\cdots\times U(p_r,q_r)$, where the $p_i,q_i$ are such that $p'+\sum_i p_i = p, q" + \sum_i q_i = q$.  There is a simple derived functor module $A_{\mathfrak q}$ of trivial infinitesimal character whose associated variety is the closure of the Richardson orbit $\mathcal O$ attached to $\mathfrak q$ in the sense of \cite{T05}.  The corresponding clan $\sigma'$ is obtained as follows.  Its first block of terms corresponds to the factor Sp$(p',q')$, taking the form $(1,+)\ldots,(p'-q',+),(p'-q'+1,p'-q'+2)^-,\ldots,\hfil\linebreak(p'+q'-1,p'+q')^-$ if $p'\ge q'$ or $(1,-),\ldots,(q'-p',-),(q'-p'+1,q'-p'+2)^-,\ldots,\hfil\linebreak(q'+p'-1,q'+p')^-$ if $q'>p'$.  Its next block of terms takes the form\hfil\linebreak $(m+1,+),\ldots,(m+p_1-q_1,+),(m,m+p_1-q_1+1)^+,\ldots,(p'+q'+1,p'+q'+p_1+q_1)^+$, if $p_1\ge q_1$, or $(m+1,-),\ldots,(m+q_1-p_1)^-,(m,m+q_1-p_1+1)^+,\ldots\hfil\linebreak(p'+q'+1,p'+q'+p_1+q_1)^+$ if $q_1>p_1$ (where
$m+1 = \lfloor(p'+q'+p_1+q_1+1)/2\rfloor$); the remaining blocks of $\sigma$ correspond similarly to the remaining factors $U(p_i,q_i)$.  Letting $H(\sigma') = (\T_1,\oT_2)$ and defining $\T_2'$ as above, one checks immediately that the orbit corresponding to $\T_2'$  is indeed $\mathcal O$.  More generally, let $X'$ be any simple Harish-Chandra module for Sp$(p',q')$ with trivial infinitesimal character and associated variety $\bar{\mathcal O}$.  Then there is a simple Harish-Chandra module $X$ for $G$ obtained from $X'$ by cohomological parabolic induction from $\mathfrak q$, whose associated variety is the closure of the orbit induced from $\mathcal O$ in the sense of \cite{T05}.  Its signed involution $\sigma(X)$ is obtained from that of $X'$ by adding the blocks of terms corresponding to the $U(p_i,q_i)$ factors in the above construction of $\sigma$, and if the theorem holds for $X'$ and its associated variety, then the same is true for $X$.
  
Given $Z$ as in the theorem, let $\bar{\mathcal O}$ be its associated variety.  If $\mathcal O$ is the closure of a Richardson orbit, say the one attached to the $\theta$-stable parabolic subalgebra $\mathfrak q$, then the module $A_{\mathfrak q}$ above lies in the same Harish-Chandra cell as $Z$ and the theorem holds for $A_{\mathfrak q}$, whence it holds for $Y$.  In general, using \cite[Proposition 2.3 (3)]{T05} and induction by stages, we can induce $\mathcal O$ to an orbit $\mathcal O'$ for a higher rank group $G'$ such that all even parts in the partition corresponding to $\mathcal O'$ have multiplicity at most $4$, whence $\mathcal O'$ is Richardson by \cite[Corollary 5.2]{T05}.  Then the result holds for the module $Z'$ correspondingly induced from $Z$.  But now the orbit $\mathcal O$ of Sp$(p,q)$ is the only one inducing to $\mathcal O'$ relative to a suitable $\theta$-stable parabolic subalgebra $\mathfrak q$ of Lie~$G'$ with Levi subgroup having Sp$(p,q)$ as its only factor of type $C$.  It follows that the theorem holds for $Z$, as desired.
\end{proof}

\end{document}